\newcommand\cyr{%
\renewcommand\rmdefault{wncyr}%
\renewcommand\sfdefault{wncyss}%
\renewcommand\encodingdefault{OT2}%
\normalfont
\selectfont}
\DeclareTextFontCommand{\textcyr}{\cyr} 
\DeclareFontFamily{OT1}{rsfs}{}
\DeclareFontShape{OT1}{rsfs}{n}{it}{<-> rsfs10}{}
\DeclareMathAlphabet{\mathscr}{OT1}{rsfs}{n}{it}
\numberwithin{equation}{section}
\newtheorem{theorem}{Theorem}[section]
\newtheorem{lemma}[theorem]{Lemma}
\newtheorem{proposition}[theorem]{Proposition}
\newtheorem{corollary}[theorem]{Corollary}
\newtheorem{question}{Question}
\newtheorem{Problem}{Problem}
\newtheorem*{maintheorem}{Main Theorem}
\theoremstyle{definition}
\newtheorem*{ack}{Acknowledgement}
\newtheorem{definition}[theorem]{Definition}
\newtheorem{remark}[theorem]{Remark}
\newtheorem{example}[theorem]{Example}
\theoremstyle{remark}
\newcommand{\Ht}{\operatorname{ht}}
\newcommand{\Frac}{\operatorname{Frac}}
\newcommand{\perf}{\operatorname{perf}}
\newcommand{\sh}{\operatorname{sh}}
\newcommand{\fm}{\frak{m}}
\newcommand{\fp}{\frak{p}}
\newcommand{\Frob}{\operatorname{Frob}}
\begin{document}
\title[On the Witt vectors of perfect rings in positive characteristic]
{On the Witt vectors of perfect rings in positive characteristic}

\author[K.Shimomoto]{Kazuma Shimomoto}
\address{Department of Mathematics, School of Science and Technology Meiji Univesity 1-1-1 Higashimita Tama-Ku Kawasaki 214-8571 Japan}
\email{shimomotokazuma@gmail.com}
\thanks{The author is partially supported by Grant-in-Aid for Young Scientists (B) \# 25800028}

\subjclass{13A35, 13B22, 13B40, 13D22, 13K05}

\keywords{Complete integral closure, Frobenius map, integrally closed domain, Witt vectors.}


\begin{abstract}
The purpose of this article is to prove some results on the Witt vectors of perfect $\mathbf{F}_p$-algebras. Let $A$ be a perfect $\mathbf{F}_p$-algebra for a prime integer $p$ and assume that $A$ has the property $\mathbf{P}$. Then does the ring of Witt vectors of $A$ also have $\mathbf{P}$? A main theorem gives an affirmative answer for $\mathbf{P}="\mbox{integrally closed}"$ under a very mild condition.
\end{abstract}

\maketitle

\section{Introduction}

The interplay between rings of prime characteristic $p>0$ and rings of mixed characteristic $p>0$ is the main focus in this article. In a certain situation, these objects are related to each other by the theory of Witt vectors. However, this is fully achieved only when the Frobenius map is an isomorphism. An $\mathbf{F}_p$-algebra $A$ is called \textit{perfect}, if the Frobenius endomorphism on $A$ is bijective. The main theme of this article is to consider the following problem:

\begin{Problem}
\label{mainproblem}
Assume that $A$ is a perfect $\mathbf{F}_p$-algebra and has the property $\mathbf{P}$. Let $\mathbf{W}(A)$ denote the ring of Witt vectors of $A$. Then does $\mathbf{W}(A)$ have also $\mathbf{P}$?
\end{Problem}

To the best of author's knowledge, almost nothing is known about this problem, except for the case when $A$ is a perfect field. Considering recent progress on $p$-adic Hodge theory, the author believes that Problem \ref{mainproblem} will gain interest in light of the importance of the use of Witt vectors. We will investigate the case $\mathbf{P}="\mbox{integrally closed}"$ (see Example \ref{deformnormal} for relevant results). Rings that are treated in this article are usually not Noetherian, since if the Frobenius map on a ring is surjective, it is not Noetherian, unless it is a field. The theory of Witt vectors was invented by Witt himself and has been widely used in number theory in the case when $A$ is a perfect field. However, the construction of the Witt vectors makes sense for any commutative ring, but it is rarely used in the research of commutative algebra. So we hope that this article will shed light on the structure of the Witt vectors for general perfect $\mathbf{F}_p$-algebras. The most important fact to keep in mind is the following. If $A$ is a perfect $\mathbf{F}_p$-algebra, then $\mathbf{W}(A)$ is a $p$-torsion free and $p$-adically complete and separated algebra with an isomorphism: 
$$
\mathbf{W}(A)/p \cdot \mathbf{W}(A) \cong A.
$$
Conversely, the ring $\mathbf{W}(A)$ can be characterized by these properties (see Proposition \ref{prop1}). The most typical case is illustrated by the fact that the ring of Witt vectors of a perfect field of characteristic $p>0$ is a complete discrete valuation ring in which $p$ generates the maximal ideal. We state our main result (see Theorem \ref{normal} and Corollary \ref{completeintegral}):

\begin{maintheorem}
Assume that $R$ is a Noetherian normal domain of characteristic $p>0$ and let $B$ be a perfect integrally closed $\mathbf{F}_p$-domain that is torsion free and integral over $R$. Then the ring of Witt vectors $\mathbf{W}(B)$ is an integrally closed domain.
\end{maintheorem}

\section{Preliminaries}

All rings in this article are assumed to be commutative with unity. However, we are mostly concerned about non-Noetherian rings. The main theme of the present article is the ring of Witt vectors, which is also called the $p$-typical Witt vectors in the literature. Although the ring of Witt vectors is defined for any commutative ring, we only consider them for rings of prime characteristic $p>0$. The basic reference is Serre's book \cite{Se}. Another good source is an expository paper \cite{Rab}. Let $A$ be a commutative ring of characteristic $p>0$. We denote by $\mathbf{W}(A)$ the ring of Witt vectors. Then we have $\mathbf{W}(A)=A^{\mathbf{N}}$ as sets, but its ring structure is defined by complicated operations using Witt-polynomials. We give a brief review of Witt vectors in the next section to the extent we need to prove our main results.

\section{$p$-adic deformation of rings and Witt vectors}

In this section, we introduce a notion of $p$-adic deformation of rings of characteristic $p>0$. First off, we recall the definition of rings with mixed characteristic.

\begin{definition}
Let $p>0$ be a prime number. Say that a ring $A$ has \textit{mixed characteristic $p>0$, or p-torsion free}, if $p$ is a nonzero divisor in $A$ and $pA \ne A$.
\end{definition}

\begin{definition}
An $\mathbf{F}_p$-algebra $B$ is \textit{perfect}, if the Frobenius map $\Frob:B \to B$ is bijective. 
\end{definition}

Now let us consider the following question: Fix a ring $B$ of prime characteristic $p>0$. Then can one find a ring $A$ of mixed characteristic $p>0$ such that $A/pA \cong B$? This naturally leads to the following definition:

\begin{definition}
Let $B$ be a ring of characteristic $p>0$. Then we say that $A$ is a \textit{$p$-adic deformation} of $B$, if the following conditions hold:

\begin{enumerate}
\item[$\bullet$]
$A$ is a ring of mixed characteristic $p>0$ and $A/pA \cong B$.

\item[$\bullet$]
$A$ is $p$-adically complete and separated.
\end{enumerate}
\end{definition}

Let us collect some basic properties of Witt vectors which we often use (see \cite{Se} for more details).

\begin{enumerate}
\item[$\bullet$]
If $A$ is a perfect $\mathbf{F}_p$-algebra, then $\mathbf{W}(A)$ is a $p$-adically complete, $p$-torsion free algebra, and there is a surjection $\pi_A:\mathbf{W}(A) \twoheadrightarrow A$ with kernel generated by $p$.

\item[$\bullet$]
If $A$ is a perfect $\mathbf{F}_p$-algebra, then there is a multiplicative injective map $[-]:A \to \mathbf{W}(A)$ such that the composite map
$$
A \xrightarrow{[-]} \mathbf{W}(A) \xrightarrow{\pi_A} A
$$
is an identity map. $[-]$ is called the \textit{Teichm\"uller mapping}. For any given $x \in \mathbf{W}(A)$, there exist a unique sequence of elements $a_0,a_1,a_2,\ldots \in A$ such that $x=[a_0]+[a_1] p+[a_2] p^2+\cdots$, which we often call the \textit{Witt representation} of $x$. We note the following simple fact.
$$
p|x \iff a_0=0.
$$

\item[$\bullet$]
If $A \to B$ is a ring homomorphism of perfect $\mathbf{F}_p$-algebras, then there is a unique ring homomorphism $\mathbf{W}(A) \to \mathbf{W}(B)$
making the following commutative square:
$$
\begin{CD}
\mathbf{W}(A) @>>> \mathbf{W}(B) \\
@V\pi_AVV @V\pi_BVV \\
A @>>> B \\
\end{CD}
$$
\end{enumerate}

We have a unique $p$-adic deformation for a perfect $\mathbf{F}_p$-algebra, as stated in the following proposition.

\begin{proposition}
\label{prop1}
Let $A$ be a perfect $\mathbf{F}_p$-algebra. Then $A$ admits a unique $p$-adic deformation $\mathcal{A}$. Moreover, if $A \to B$ is a ring homomorphism of perfect $\mathbf{F}_p$-algebras, there exists a unique commutative diagram:
$$
\begin{CD}
\mathcal{A} @>>> \mathcal{B} \\
@VVV @VVV \\
A @>>> B
\end{CD}
$$
such that both $\mathcal{A} \to A$ and $\mathcal{B} \to B$ are $p$-adic deformations.
\end{proposition}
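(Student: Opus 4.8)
The plan is to take $\mathcal{A}=\mathbf{W}(A)$ for existence and then prove uniqueness (and functoriality) by the classical machinery of strict $p$-rings. \emph{Existence} is immediate from the properties of Witt vectors recalled above: $\mathbf{W}(A)$ is $p$-torsion free, $p$-adically complete and separated, and $\pi_{A}\colon\mathbf{W}(A)\twoheadrightarrow A$ has kernel $p\,\mathbf{W}(A)$, so $\mathbf{W}(A)/p\,\mathbf{W}(A)\cong A$; moreover $p\,\mathbf{W}(A)\neq\mathbf{W}(A)$ since $A\neq 0$. Thus $\mathbf{W}(A)$ is a $p$-adic deformation of $A$, and the entire content of the proposition lies in the uniqueness clause: for an arbitrary $p$-adic deformation $\mathcal{A}'$ of $A$ I must produce a canonical isomorphism $\mathbf{W}(A)\xrightarrow{\ \sim\ }\mathcal{A}'$ over $A$.

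The one computational input I would isolate first is the congruence: in any ring $R$, if $k\geq 1$ and $x\equiv y\pmod{p^{k}R}$ then $x^{p}\equiv y^{p}\pmod{p^{k+1}R}$ (write $x=y+p^{k}z$, expand, and use $p\mid\binom{p}{i}$ for $0<i<p$ together with $kp\geq k+1$); iterating, $x\equiv y\pmod{pR}$ implies $x^{p^{n}}\equiv y^{p^{n}}\pmod{p^{n+1}R}$. Next, for a $p$-adic deformation $\mathcal{A}'$ with $q\colon\mathcal{A}'\twoheadrightarrow A$, $\ker q=p\,\mathcal{A}'$, I would build a Teichm\"uller section: for $a\in A$ and each $n$, use perfectness to form $a^{1/p^{n}}\in A$, lift it to some $\widetilde{a}_{n}\in\mathcal{A}'$, and observe that $(\widetilde{a}_{n+1})^{p}$ and $\widetilde{a}_{n}$ both lift $a^{1/p^{n}}$, hence agree modulo $p\,\mathcal{A}'$, so that $(\widetilde{a}_{n+1})^{p^{n+1}}\equiv(\widetilde{a}_{n})^{p^{n}}\pmod{p^{n+1}\mathcal{A}'}$; by $p$-adic completeness and separatedness the sequence $\bigl((\widetilde{a}_{n})^{p^{n}}\bigr)_{n}$ converges to an element $\tau(a)\in\mathcal{A}'$ independent of the lifts. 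One then checks $q(\tau(a))=a$, that $\tau$ is multiplicative (take products of lifts, use continuity of multiplication), and that every $z\in\mathcal{A}'$ has a \emph{unique} expansion $z=\sum_{i\geq0}\tau(a_{i})\,p^{i}$ with $a_{i}\in A$ --- existence by repeatedly subtracting the Teichm\"uller representative of its image in $A$ and dividing by $p$ (valid since $\mathcal{A}'$ is $p$-torsion free), together with completeness; uniqueness from $p$-torsion freeness and $\ker q=p\,\mathcal{A}'$.

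With $\tau$ in hand I would define $s\colon\mathbf{W}(A)\to\mathcal{A}'$ by $s\bigl(\sum_{i}[a_{i}]p^{i}\bigr)=\sum_{i}\tau(a_{i})p^{i}$, using the Witt representation in the source and $p$-adic convergence in the target; by the previous paragraph $s$ is a bijection with $q\circ s=\pi_{A}$ and $s([a])=\tau(a)$. The one remaining point --- and the main obstacle --- is that $s$ is a \emph{ring homomorphism}. Here one uses that the operations of $\mathbf{W}(A)$, expressed through the Witt representation, are given by \emph{universal} formulas
$$
\textstyle\bigl(\sum_{i}[a_{i}]p^{i}\bigr)+\bigl(\sum_{i}[b_{i}]p^{i}\bigr)=\sum_{i}[S_{i}]\,p^{i},\qquad\bigl(\sum_{i}[a_{i}]p^{i}\bigr)\cdot\bigl(\sum_{i}[b_{i}]p^{i}\bigr)=\sum_{i}[M_{i}]\,p^{i},
$$
where $S_{i},M_{i}$ are fixed polynomial expressions with $\mathbf{F}_{p}$-coefficients in the $p$-power roots of the $a_{j},b_{j}$, the same for every perfect $\mathbf{F}_{p}$-algebra (one gets them by applying functoriality of $\mathbf{W}$ to polynomial rings on generic coefficients, after perfecting). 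I would then show the same formulas hold in $\mathcal{A}'$ with $\tau$ in place of $[-]$: modulo $p$ they read $a_{0}+b_{0}=S_{0}$ and $a_{0}b_{0}=M_{0}$ in $A$, and given an identity modulo $p^{n}$ one divides the discrepancy by $p^{n}$ (torsion freeness) and identifies it, via the congruence lemma and the same universal recipe, with the next coefficient modulo $p$ --- an induction closed up by completeness. This makes $s$ an isomorphism of rings over $A$; it is moreover the only ring map $\mathbf{W}(A)\to\mathcal{A}'$ lifting $\id_{A}$, because such a map sends $p$ to $p$ (hence is $p$-adically continuous) and is forced on Teichm\"uller elements ($[a]\mapsto$ the unique multiplicative lift $\tau(a)$) and thus on the dense set of finite sums $\sum_{i\leq N}[a_{i}]p^{i}$.

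Finally, for functoriality, given $A\to B$ I would set $\mathcal{A}=\mathbf{W}(A)$, $\mathcal{B}=\mathbf{W}(B)$: the recalled functoriality of $\mathbf{W}$ gives a ring homomorphism $\mathbf{W}(A)\to\mathbf{W}(B)$ commuting with the $\pi$'s, and by the uniqueness just established it is the unique such lift. For arbitrary $p$-adic deformations $\mathcal{A}'$ of $A$ and $\mathcal{B}'$ of $B$, composing with the canonical isomorphisms $\mathcal{A}'\cong\mathbf{W}(A)$, $\mathcal{B}'\cong\mathbf{W}(B)$ yields the asserted commutative diagram, unique since each of its lifts is. I expect the genuine difficulty to be concentrated in the ring-homomorphism property of $s$ --- that is, the \emph{additivity} of the Teichm\"uller expansion in the abstract deformation $\mathcal{A}'$, multiplicativity being immediate from that of $\tau$ --- which is exactly the point at which the universal Witt-polynomial identities (or an equivalent induction modulo $p^{n}$) have to be invoked.
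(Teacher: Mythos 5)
Your proof is correct and is essentially the paper's own route: the paper takes $\mathcal{A}=\mathbf{W}(A)$ for existence and cites Serre (\emph{Local Fields}, Ch.~II \S 5, Proposition 10) for uniqueness and functoriality, and your argument --- the Teichm\"uller section via $p^{n}$-th power limits, unique expansions $\sum\tau(a_i)p^i$, and the universal Witt-polynomial identities forcing $s$ to be a ring map --- is exactly the classical strict $p$-ring argument being cited there (the paper also notes Scholze's cotangent-complex proof as an alternative, which you do not need).
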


\begin{proof}
The $p$-adic deformation is given as the ring of Witt vectors.  For its uniqueness, the proof is found in (\cite{Se}; Proposition 10 in Chapter II \S 5). To prove the proposition, one can avoid the use of Witt vectors. Alternatively, one uses the cotangent complex (see \cite{Sch}; Theorem 5.11 and Theorem 5.12).
\end{proof}

\begin{definition}
Let $A$ be perfect $\mathbf{F}_p$-algebra. The unique ring map on $\mathbf{W}(A)$ which lifts the Frobenius map on $A$ is called the \textit{Witt-Frobenius map}.
\end{definition}

The following example shows that Proposition \ref{prop1} fails for non-perfect algebras.

\begin{example}
Put $R_1=\mathbf{Z}_p[[x,y]]/(xy)$ and $R_2=\mathbf{Z}_p[[x,y]]/(p-xy)$. Then $R_1$ is non-regular, while $R_2$ is regular, so $R_1$ is not isomorphic to $R_2$. However, we will get
$$
R_1/pR_1 \cong R_2/pR_2 \cong \mathbf{F}_p[[x,y]]/(xy).
$$
The totality of the set of deforming a fixed $\mathbf{F}_p$-algebra to $\mathbf{Z}_p$-adically complete and flat algebras is related to the cotangent complex, and we will explain this a bit in the last section of the article.
\end{example}

\begin{lemma}
\label{p-adic}
Let $B$ be a perfect $\mathbf{F}_p$-domain $($resp. a perfect $\mathbf{F}_p$-algebra$)$. Then the $p$-adic deformation of $B$ is an integral domain $($resp. reduced$)$.
\end{lemma}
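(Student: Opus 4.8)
The plan is to dispense with the Witt-vector combinatorics and use only the three structural facts recorded above. By Proposition~\ref{prop1} the $p$-adic deformation of $B$ is $\mathbf{W}(B)$, which is $p$-torsion free, satisfies $\mathbf{W}(B)/p\mathbf{W}(B)\cong B$ via $\pi_B$, and is $p$-adically separated: indeed, if $p^n\mid x$ for all $n$ then, writing $x=[a_0]+[a_1]p+[a_2]p^2+\cdots$ in its Witt representation and using repeatedly the criterion $p\mid x\iff a_0=0$, one gets $a_i=0$ for every $i$, so $x=0$; equivalently $\bigcap_{n\ge 1}p^n\mathbf{W}(B)=0$. It therefore suffices to prove the following purely ring-theoretic statement: if $R$ is $p$-torsion free and $p$-adically separated and $R/pR$ is a domain (resp.\ reduced), then $R$ is a domain (resp.\ reduced).

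For the domain case, let $a,b\in R$ be nonzero. Since $\bigcap_n p^nR=0$, the sets $\{i\ge 0: a\in p^iR\}$ and $\{j\ge 0: b\in p^jR\}$ are nonempty and bounded above, so there are largest such $i$ and $j$; write $a=p^ia'$ and $b=p^jb'$ with $a',b'\notin pR$. The images $\bar a',\bar b'\in R/pR$ are then nonzero, hence $\bar a'\bar b'\ne 0$ because $R/pR$ is a domain; in particular $a'b'\notin pR$, so $a'b'\ne 0$. Since $R$ is $p$-torsion free, multiplication by $p^{i+j}$ is injective, whence $ab=p^{i+j}a'b'\ne 0$. As also $R\ne 0$ (because $pR\ne R$), $R$ is an integral domain.

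For the reduced case, suppose $a\in R$ satisfies $a^k=0$ for some $k\ge 1$ and $a\ne 0$; choose the largest $i$ with $a\in p^iR$ and write $a=p^ia'$ with $a'\notin pR$. Then $p^{ik}a'^k=a^k=0$, and $p$-torsion freeness gives $a'^k=0$; reducing modulo $p$ yields $\bar a'^k=0$ in the reduced ring $R/pR$, so $\bar a'=0$, contradicting $a'\notin pR$. Hence $R$ is reduced, which completes both cases. There is no genuine obstacle here: the one step that uses anything specific about Witt vectors is the verification that $\mathbf{W}(B)$ is $p$-adically separated — so that ``the largest power of $p$ dividing an element'' is well defined — and for that one invokes uniqueness of the Witt representation together with the criterion $p\mid x\iff a_0=0$; the rest is the two-line reduction modulo $p$ combined with $p$-torsion freeness.
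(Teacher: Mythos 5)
Your proof is correct, but for the integral-domain half it takes a genuinely different route from the paper. The paper's proof embeds $\mathbf{W}(B)$ into $\mathbf{W}(\Frac(B))$: since $B$ is perfect, $\Frac(B)$ is a perfect field, $\mathbf{W}(\Frac(B))$ is a complete discrete valuation ring, and the injection $B\hookrightarrow\Frac(B)$ induces an injection on Witt vectors, so $\mathbf{W}(B)$ is a subring of a domain. You instead prove the purely ring-theoretic statement that a $p$-torsion free, $p$-adically separated ring $R$ with $R/pR$ a domain (resp.\ reduced) is itself a domain (resp.\ reduced), by extracting from each nonzero element the exact power of $p$ dividing it and reducing modulo $p$; this is sound, since separatedness guarantees the maximal $i$ with $a\in p^iR$ exists and $p$-torsion freeness makes multiplication by $p^{i+j}$ injective. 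Your route is more elementary and more general: it never uses perfectness and applies verbatim to any $p$-adic deformation in the sense of the paper's definition, so even your detour through Witt representations to verify separatedness is redundant, completeness and separatedness being part of that definition. What the paper's route buys is the injection $\mathbf{W}(B)\hookrightarrow\mathbf{W}(\mathcal{K})$ into a discrete valuation ring, which is precisely the device reused later (Proposition \ref{WittVector} and the proof of Theorem \ref{normal}); for the reduced half, your argument simply makes explicit what the paper dismisses as ``easy to see'' from $p$-adic separatedness.
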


\begin{proof}
Assume that $B$ is a perfect $\mathbf{F}_p$-domain. Then $\Frac(B)$ is a perfect field and $\mathbf{W}(\Frac(B))$ is a complete discrete valuation ring. On the other hand, the injection $B \hookrightarrow \Frac(B)$ defines an injection $\mathbf{W}(B) \hookrightarrow \mathbf{W}(\Frac(B))$. Hence $\mathbf{W}(B)$ is an integral domain.

Next, assume that $B$ is a perfect $\mathbf{F}_p$-algebra (hence reduced). Then since $\mathbf{W}(B)$ is $p$-adically complete, it is $p$-adically separated. Using this fact, it is easy to see that $\mathbf{W}(B)$ is reduced.
\end{proof}

We also note the following fact.

\begin{lemma}
\label{adic}
Let $A$ be a perfect $\mathbf{F}_p$-algebra. Then the localization of $A$ and the completion of $A$ with respect to any ideal of $A$ are again perfect.
\end{lemma}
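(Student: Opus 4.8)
The plan is to verify the defining property directly: that the Frobenius $\Frob\colon x\mapsto x^{p}$ is bijective on $S^{-1}A$ (for a multiplicatively closed $S\subseteq A$) and on the $I$-adic completion $\widehat{A}=\varprojlim_{n}A/I^{n}$ (for an ideal $I\subseteq A$). I will use repeatedly that a perfect ring is reduced, that every element of $A$ has a unique $p$-th root in $A$, and that, $\Frob_{A}$ being a ring isomorphism, one has $\Frob_{A}^{-1}(I^{n})=\big(\Frob_{A}^{-1}(I)\big)^{n}$.

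\emph{Localization.} Given $a/s\in S^{-1}A$, write $a=a_{0}^{\,p}$ and $s=s_{0}^{\,p}$ with $a_{0},s_{0}\in A$; since $s_{0}^{\,p}=s\in S$ becomes a unit in $S^{-1}A$, so does $s_{0}$, and $(a_{0}s_{0}^{-1})^{p}=a/s$, which gives surjectivity. For injectivity, if $(a/s)^{p}=0$ then $ta^{p}=0$ for some $t\in S$; writing $t=u^{p}$ yields $(ua)^{p}=0$, hence $ua=0$ as $A$ is reduced, hence $ta=u^{p-1}(ua)=0$ and $a/s=0$. So $S^{-1}A$ is perfect.

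\emph{Completion.} Since the ideal generated by the $p$-th powers of elements of $I^{n}$ lies in $I^{pn}\subseteq I^{n}$, the map $\Frob_{A}$ is $I$-adically continuous and induces a ring endomorphism $\widehat{\Frob}$ of $\widehat{A}$, which is nothing but $\xi\mapsto\xi^{p}$. Injectivity of $\widehat{\Frob}$ amounts to the vanishing of $\varprojlim_{n}\Frob_{A}^{-1}(I^{n})/I^{n}$, obtained by a cofinality argument. For surjectivity, given $\xi\in\widehat{A}$ choose a sequence $\xi_{n}\in A$ with $\xi_{n+1}-\xi_{n}\in I^{n}$ representing it, and put $\eta_{n}:=\xi_{n}^{1/p}\in A$; in characteristic $p$ one has $(\eta_{n+1}-\eta_{n})^{p}=\eta_{n+1}^{p}-\eta_{n}^{p}=\xi_{n+1}-\xi_{n}\in I^{n}$, so $\eta_{n+1}-\eta_{n}\in\big(\Frob_{A}^{-1}(I)\big)^{n}$, and once $(\eta_{n})_{n}$ is known to be $I$-adically Cauchy its limit $\eta\in\widehat{A}$ satisfies $\eta^{p}=\xi$ by continuity of $\widehat{\Frob}$, completing the argument.

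The step I expect to be the real obstacle is this last one: the construction a priori only yields a sequence that is Cauchy for the $\Frob_{A}^{-1}(I)$-adic topology, so what must be shown is that this topology is not strictly coarser than the $I$-adic one, equivalently that some power of $\Frob_{A}^{-1}(I)$ is contained in $I^{n}$ for each $n$. This is automatic when $I$ is radical (then $\Frob_{A}^{-1}(I)=I$) and when $I$ is finitely generated (then $(\sqrt{I})^{N}\subseteq I$ for some $N$ and $\Frob_{A}^{-1}(I)\subseteq\sqrt{I}$), which are the cases relevant to the applications; I would reduce to such an $I$ before carrying out the two arguments above, and the same reduction settles the injectivity step. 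The localization statement needs no such hypothesis.
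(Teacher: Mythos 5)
The paper offers no argument of its own for this lemma (it simply cites Dietz), so your proposal stands or falls on its own merits. The localization half is correct and complete. The completion half correctly isolates the crux, namely comparing the $I$-adic topology with the $\Frob^{-1}(I)$-adic one, but your resolution has two flaws. First, your justification of the finitely generated case rests on a false claim: in a non-Noetherian ring, $I$ finitely generated does \emph{not} imply $(\sqrt{I})^{N}\subseteq I$ for some $N$, and in a perfect ring this containment essentially never holds for a non-radical ideal --- already for $I=(x)$ in $\mathbf{F}_p[x^{1/p^{\infty}}]$ one has $x^{1/p^{e}}\in\sqrt{I}$ while $x^{N/p^{e}}\notin I$ once $p^{e}>N$. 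The finitely generated case is nevertheless fine, but for a different reason: if $I=(f_{1},\ldots,f_{r})$, then since the inverse of Frobenius is a ring homomorphism, $\Frob^{-1}(I)=(f_{1}^{1/p},\ldots,f_{r}^{1/p})$, and a pigeonhole count of exponents gives $\big(\Frob^{-1}(I)\big)^{r(p-1)+1}\subseteq I$; hence the two topologies coincide and both your injectivity and surjectivity arguments go through.

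Second, the proposed ``reduction to such an $I$'' for an arbitrary ideal does not exist, and in fact the statement for arbitrary ideals is false, so no patch can work at that level of generality. Let $A$ be the perfect closure of $\mathbf{F}_p[x_{1},x_{2},\ldots]$ (countably many variables) and $I=(x_{1},x_{2},\ldots)$. The coherent sequence $\xi_{n}=\sum_{k=1}^{n-1}x_{1}\cdots x_{k}$ defines an element $\xi$ of the $I$-adic completion $\widehat{A}$. If some $\eta\in\widehat{A}$ satisfied $\eta^{p}=\xi$, represent it by $\eta_{n}\in A$ with $\eta_{n+1}-\eta_{n}\in I^{n}$ and $\eta_{n}-\xi_{n}^{1/p}\in\Frob^{-1}(I)^{n}$; for each $j$ the monomial $(x_{1}\cdots x_{j})^{1/p}$ lies neither in $I$ nor in $\Frob^{-1}(I)^{n}$ for $n>j$ (all its exponents are $1/p$), so its coefficient in $\eta_{n}$ equals $1$ once $n>j$ and is unchanged in passing from $\eta_{n}$ to $\eta_{n+1}$; hence $\eta_{1}$ would have infinitely many monomials in its support, a contradiction. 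Thus Frobenius is not surjective on this $\widehat{A}$, and the lemma must be read with a finite-generation (or similar) hypothesis on the ideal --- which covers the paper's only use of it, in Proposition \ref{WittVector2}, where the ideal is expanded from the Noetherian ring $R$. With that hypothesis, and with the corrected argument above in place of the $\sqrt{I}$ claim, your direct proof is sound.
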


\begin{proof}
This is found in (\cite{Die}; Lemma 3.5 and Lemma 3.6).
\end{proof}

\section{Completion of rings}

For an ideal $I$ of a ring $A$ and an $A$-module $M$, let $M^I:=\varprojlim_{n} M/I^n M$ denote the $I$-adic completion of $M$. For $I=(x)$ a principal ideal, the symbol $M^{\wedge}$ denotes the $x$-adic completion of $M$. Let us recall the following result.

\begin{proposition}[Bartijn-Strooker]
\label{Strooker}
Assume that $A$ is Noetherian. Then $M/IM \cong M^I/IM^I$ and the $A$-module $M^I$ has a natural $A^I$-module structure. Finally, the canonical homomorphism:
$$
M^I \to (M^I)^I
$$
is a bijection, that is, $M^I$ is $I$-adically complete and separated.
\end{proposition}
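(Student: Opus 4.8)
The plan is to reduce the whole statement to one non-formal fact about free modules, proved by a Mittag--Leffler argument. Throughout write $\widehat M := M^I = \varprojlim_n M/I^nM$. The transition maps of $\{M/I^nM\}_n$ are surjective, so each canonical map $\widehat M \to M/I^nM$ is surjective; write $F^n\widehat M$ for its kernel, and note that one always has $I^n\widehat M \subseteq F^n\widehat M$. The entire proposition is formal once we prove the reverse inclusion $F^n\widehat M \subseteq I^n\widehat M$ for all $n$. Indeed, granting this the canonical surjection induces an isomorphism $\widehat M/I^n\widehat M \xrightarrow{\ \sim\ } M/I^nM$ for every $n$ (the case $n=1$ being the assertion $M/IM\cong M^I/IM^I$); passing to inverse limits over $n$ gives $(\widehat M)^I = \varprojlim_n \widehat M/I^n\widehat M \cong \varprojlim_n M/I^nM = \widehat M$, so $\widehat M$ is $I$-adically complete, and $\bigcap_n I^n\widehat M = \bigcap_n F^n\widehat M = \ker\bigl(\widehat M \to \varprojlim_n M/I^nM\bigr) = 0$, so it is separated. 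The $A^I$-module structure on $\widehat M$ is automatic, since each $M/I^nM$ is an $A/I^n$-module compatibly with the transition maps. So it remains to prove $F^n\widehat M \subseteq I^n\widehat M$.

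Next I would reduce this to the case of a free module. Choose a surjection $F \twoheadrightarrow M$ from a (possibly infinite rank) free module $F = A^{(\Lambda)}$, with kernel $K$. For each $n$ the sequence $0 \to K/(K\cap I^nF) \to F/I^nF \to M/I^nM \to 0$ is exact, and it is an exact sequence of inverse systems whose left-hand system has surjective transition maps; hence that system has vanishing $\varprojlim^1$, and we obtain an exact sequence $0 \to \widetilde K \to \widehat F \to \widehat M \to 0$ with $\widetilde K := \varprojlim_n K/(K\cap I^nF)$ (this step uses nothing about $A$). Now suppose we have already proved, for the free module $F$, that the level-$n$ projection induces an isomorphism $\widehat F/I^n\widehat F \cong F/I^nF$. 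Since $\widetilde K \twoheadrightarrow K/(K\cap I^nF) = (K+I^nF)/I^nF$, the image of $\widetilde K$ in $\widehat F/I^n\widehat F \cong F/I^nF$ is exactly $(K+I^nF)/I^nF$, and the third isomorphism theorem yields
$$
\widehat M/I^n\widehat M \;=\; \widehat F/\bigl(\widetilde K + I^n\widehat F\bigr) \;\cong\; \bigl(F/I^nF\bigr)\big/\bigl((K+I^nF)/I^nF\bigr) \;=\; M/I^nM,
$$
the composite being the level-$n$ projection, as required.

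The heart of the argument is therefore the free case: for $F = A^{(\Lambda)}$, every $x = (x_m)_m \in \widehat F$ with $x_m = 0$ for $m\le n$ lies in $I^n\widehat F$. Write $I^n = (b_1,\dots,b_s)$. For each $m$ one has $x_m \in I^n\cdot(F/I^mF)$, so $x_m = \sum_{j=1}^s b_j z_m^{(j)}$ for some $z_m^{(j)} \in F/I^mF$; the issue is to choose these liftings compatibly in $m$, for then $y^{(j)} := (z_m^{(j)})_m \in \widehat F$ and $x = \sum_j b_j y^{(j)} \in I^n\widehat F$. The set of liftings at level $m$ is nonempty and a torsor under $S_m := \ker\bigl((F/I^mF)^s \xrightarrow{(b_1,\dots,b_s)} F/I^mF\bigr)$, which (as finite products commute with direct sums) equals $(T_m)^{(\Lambda)}$ with $T_m := \ker\bigl((A/I^m)^s \xrightarrow{(b_1,\dots,b_s)} A/I^m\bigr)$, and the obstruction to a compatible choice lies in $\varprojlim^1$ of the system $\{S_m\}_m$. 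So it suffices that $\{S_m\}_m$ be Mittag--Leffler, and since direct sums are exact it is enough that $\{T_m\}_m$ be. For $m \ge \ell + n$ one checks that the image of $T_m \to T_\ell$ equals the image of $T := \ker\bigl(A^s \xrightarrow{(b_j)} A\bigr)$ in $(A/I^\ell)^s$: the inclusion $\supseteq$ is clear, while for $\subseteq$ one lifts an element of $T_m$ to $w \in A^s$, writes $\sum_j b_jw_j \in I^m = I^{m-n}(b_1,\dots,b_s)$ as $\sum_j b_jc_j$ with $c_j \in I^{m-n}\subseteq I^\ell$, and notes that $w - c \in T$ agrees with $w$ modulo $I^\ell A^s$. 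Thus the image stabilizes, $\{T_m\}_m$ and hence $\{S_m\}_m$ is Mittag--Leffler, $\varprojlim^1\{S_m\} = 0$, the compatible liftings exist, and the free case — with it the proposition — follows. The delicate point is precisely this last $\varprojlim^1$-vanishing; it uses only that $I$ is finitely generated (which the Noetherian hypothesis guarantees), so the statement in fact holds for any finitely generated ideal.
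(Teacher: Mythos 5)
Your proof is correct: the formal reduction to showing $\ker(M^I \to M/I^nM) \subseteq I^nM^I$, the passage to a possibly infinite-rank free presentation (where surjectivity of the transition maps of $\{K/(K\cap I^nF)\}$ kills the relevant $\varprojlim^1$ and the third isomorphism theorem transfers the free case to $M$), and the key Mittag--Leffler computation for $T_m=\ker\bigl((A/I^m)^s \to A/I^m\bigr)$, where the identity $I^m=I^{m-n}(b_1,\dots,b_s)$ for $m\ge \ell+n$ shows the images in $T_\ell$ stabilize at the image of $T=\ker(A^s\to A)$, all check out. Note, however, that the paper does not prove this proposition at all: its ``proof'' is a citation to Strooker (Theorem 2.2.5) and Yekutieli (Corollary 3.5). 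So your route is genuinely different in that it is self-contained, and it buys more than the statement asks for: as you observe, Noetherianness enters only through finite generation of $I$ (equivalently of $I^n$), so you have effectively reproved the general fact, due to Yekutieli and standard in the literature, that $I$-adic completion of an arbitrary module is complete and separated with $M^I/I^nM^I\cong M/I^nM$ whenever $I$ is finitely generated; the citation buys brevity, while your argument makes transparent exactly which hypothesis is used and where. One stylistic remark: the assertion that the obstruction to a compatible system of liftings lies in $\varprojlim^1\{S_m\}$ is a standard fact about countable inverse systems of torsors, but since it carries the whole weight of the free case you may wish either to cite it or to note the direct alternative: the lifting sets $L_m$ form an inverse system of nonempty cosets of the $S_m$, so once the images of $S_m\to S_\ell$ stabilize, the images of $L_m\to L_\ell$ are nested nonempty cosets of a fixed subgroup and hence stabilize as well, and a Mittag--Leffler inverse system of nonempty sets indexed by $\mathbf{N}$ has nonempty limit.
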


\begin{proof}
This is found in (\cite{Str}; Theorem 2.2.5, or \cite{Yek}; Corollary 3.5).
\end{proof}

The following lemma is a starting point for embedding a ring into the Witt vectors.

\begin{lemma}
\label{completion}
Let $A$ be a ring. Then the following statement hold.

\begin{enumerate}
\item[$\mathrm{(i)}$]
Let $x$ be a regular element in $A$. Then $x$ is regular in $A^{\wedge}$ $($the $x$-adic completion of $A$$)$.

\item[$\mathrm{(ii)}$]
Let $p$ be a prime integer and assume that $A$ is a $p$-torsion free ring and $A/pA$ is a perfect $\mathbf{F}_p$-algebra. Then there exists a canonical homomorphism:
$$
A \to A^{\wedge} \cong \mathbf{W}(A/pA),
$$
which lifts the identity on $A/pA$. If moreover $A$ is $p$-adically separated, the above map is injective.

\item[$\mathrm{(iii)}$]
Suppose that $A$ is $x$-adically complete and $A/xA$ is $J$-adically complete for an ideal $J$ of $A$. Suppose further that either one of the following conditions holds:

\begin{enumerate}
\item[$\bullet$]
$A$ is a Noetherian ring.

\item[$\bullet$]
$J=(x_1,\ldots,x_d)$ and $(x,x_1,\ldots,x_d)$ is a regular sequence in $A$.
\end{enumerate}
Then $A$ is complete in the $(xA+J)$-adic topology.
\end{enumerate}
\end{lemma}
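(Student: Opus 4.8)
The plan is to dispatch (i) and (ii) quickly and then concentrate on (iii), which carries the real content. For (i) I would argue directly with the inverse limit: an element of $A^{\wedge}$ is a compatible system $\xi=(\xi_n\bmod x^nA)_{n\ge1}$ with representatives $\xi_n\in A$ satisfying $\xi_{n+1}-\xi_n\in x^nA$. If $x\xi=0$ then $x\xi_{n+1}\in x^{n+1}A$ for every $n$, and cancelling $x$ (legitimate since $x$ is regular in $A$) gives $\xi_{n+1}\in x^nA$; together with $\xi_n\equiv\xi_{n+1}\pmod{x^nA}$ this forces $\xi_n\in x^nA$, i.e.\ the $n$-th component of $\xi$ is zero, so $\xi=0$.

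For (ii), apply (i) with $x=p$: $A^{\wedge}$ is $p$-torsion free. Moreover $A^{\wedge}$ is $p$-adically complete and separated and $A^{\wedge}/pA^{\wedge}\cong A/pA$ — the standard behaviour of completion along a principal ideal generated by a nonzerodivisor, proved by the same cancellation bookkeeping as in (i). Thus $A^{\wedge}$ is a $p$-adic deformation of the perfect $\mathbf{F}_p$-algebra $A/pA$, so by the uniqueness in Proposition \ref{prop1} we obtain a canonical isomorphism $A^{\wedge}\cong\mathbf{W}(A/pA)$, and the completion map $A\to A^{\wedge}$ reduces to the identity modulo $p$. If $A$ is $p$-adically separated then $\ker(A\to A^{\wedge})=\bigcap_np^nA=0$, so this map is injective.

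For (iii), put $\fb=xA+J$. First observe that the $\fb$-adic topology coincides with the topology defined by the ideals $\{x^iA+J^jA\}_{i,j}$: one has $x^iA+J^iA\subseteq\fb^i$ and, expanding, $\fb^{i+j}\subseteq x^iA+J^jA$. Hence the $\fb$-adic completion of $A$ equals $\varprojlim_{i,j}A/(x^iA+J^jA)=\varprojlim_i\bigl(\text{the $J$-adic completion of }A/x^iA\bigr)$, while $A=\varprojlim_iA/x^iA$ because $A$ is $x$-adically complete. So the assertion reduces to the claim that $A/x^iA$ is $J$-adically complete and separated for every $i\ge1$. I would prove this by induction on $i$, the case $i=1$ being the hypothesis. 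For the inductive step, use the exact sequence $0\to x^iA/x^{i+1}A\to A/x^{i+1}A\to A/x^iA\to0$ in which the submodule is isomorphic to $A/xA$ via multiplication by $x^i$ ($x$ being regular). Both end terms are $J$-adically complete (the submodule by the base case, the quotient by induction). Passing to $J$-adic completions — the transition maps are surjective, so the relevant $\varprojlim^1$ vanishes — yields, \emph{provided} the $J$-adic filtration on the submodule induced from $A/x^{i+1}A$ coincides with its intrinsic $J$-adic filtration, a short exact sequence of $J$-adic completions; comparing with the original sequence via the five lemma then shows $A/x^{i+1}A$ is $J$-adically complete and separated.

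The crux — and the place where the two alternative hypotheses enter — is exactly this filtration comparison. Unwinding the isomorphism $x^iA/x^{i+1}A\cong A/xA$, it amounts to controlling the colon ideals $(J^nA:x^i)$: when $A$ is Noetherian this is precisely the Artin--Rees lemma, while when $J=(x_1,\dots,x_d)$ with $(x,x_1,\dots,x_d)$ a regular sequence, a devissage along the $x$-adic and $J$-adic filtrations — using that $A$ is $x$-adically separated (so that $x$ may be transposed past each $x_k$ in the regular sequence), and that the associated graded ring of a regular sequence is a polynomial ring over $A/J$ (so that each $J^{n-1}A/J^nA$ is $A/J$-free) — shows that $x$, hence $x^i$, is a nonzerodivisor modulo $J^nA$, so that $(J^nA:x^i)=J^nA$ and the two filtrations agree identically. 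I expect this last step — circumventing the failure of the Artin--Rees property in the non-Noetherian setting by exploiting the regular-sequence structure — to be the main obstacle; everything else is routine bookkeeping with inverse limits.
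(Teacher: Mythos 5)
Parts (i) and (ii) are correct and essentially the paper's own arguments, and the skeleton of your (iii) matches the paper's proof as well: reduce $(xA+J)$-adic completeness to $J$-adic completeness of each $A/x^iA$ via the double inverse limit, and prove the latter by induction on $i$ using a short exact sequence whose $J$-adic completion must be shown to remain exact (the paper uses $0 \to A/x^{n-1}A \xrightarrow{\,x\,} A/x^nA \to A/xA \to 0$ and replaces the $J$-adic filtration by the cofinal filtration $(x_1^k,\ldots,x_d^k)$, you use the other filtration of $A/x^{i+1}A$ and the ideals $J^n$ directly; these are immaterial differences). Your reduction of the filtration comparison to the statement that $x^i$ is a nonzerodivisor on $A/J^nA$, i.e.\ $(J^nA:x^i)=J^nA$, is also correct, and you have located the real content accurately: the paper's snake-lemma step needs exactly the analogous fact, namely that $x$ is a nonzerodivisor on $A/(x_1^k,\ldots,x_d^k)A$, and passes over it without comment.

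The gap is in your justification of that crux. The hypothesis puts $x$ \emph{first} in the regular sequence; what you need is the permuted statement with $x$ last (both for ``$x$ is a nonzerodivisor mod $J$'' and for ``$\gr_J(A)$ is a polynomial ring over $A/J$,'' since the latter presupposes that $(x_1,\ldots,x_d)$ is quasi-regular on $A$ itself, not merely on $A/xA$). Your stated reason, ``$A$ is $x$-adically separated, so $x$ may be transposed past each $x_k$,'' only licenses the first transposition: the transposition lemma says that if $(a,b)$ is $M$-regular and $M$ is $a$-adically separated then $(b,a)$ is $M$-regular, and after swapping $x$ with $x_1$ the module on which you must apply it next is $A/x_1A$, then $A/(x_1,x_2)A$, and so on. The $x$-adic separatedness of these quotients is an additional claim: it does not follow formally from separatedness (or even completeness) of $A$, since quotients of $x$-adically complete rings need not be $x$-adically separated; in the non-Noetherian setting this is precisely where the difficulty sits. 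It can be proved here — e.g.\ one shows inductively that $A/(x_1,\ldots,x_{k})A$ is $x$-adically separated by a Cauchy-correction argument using the $x$-adic completeness of $A$ and the regularity of $(x^n,x_1,\ldots,x_k)$ for all $n$ (powers of a regular sequence being regular) — but some such argument must actually be supplied before the devissage you describe closes the proof; as written, the transposition step is asserted rather than proved.
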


\begin{proof}
(i): We fix an element
$$
a=(\overline{a}_1,\overline{a}_2,\ldots) \in A^{\wedge} \subset \prod_{n>0} A/x^nA. 
$$
Assume that we have $xa \in x^NA^{\wedge}$ for $N>0$. Since $x\overline{a}_k \in x^N(A/x^kA) \cong (x^NA+x^kA)/x^kA$, we may lift $\overline{a}_k$ to $a_k \in A$ so that $xa_k=x^Nb_k+x^kc_k$ for some $b_k, c_k \in A$. Since $A$ is $x$-torsion free, we have $a_k=x^{N-1}b_k+x^{k-1}c_k \in x^{N-1}A+x^{k-1}A$, which implies that $\overline{a}_{k-1} \in x^{N-1}(A/x^{k-1}A)$, because $\overline{a}_k \in A/x^kA$ maps to $\overline{a}_{k-1} \in A/x^{k-1}A$. Since this holds for any $k>0$, we get
$$
a \in x^{N-1}A^{\wedge}.
$$
Now assume that $xa=0$. Then it is obvious that $xa \in x^NA^{\wedge}$ for all $N>0$, so the above argument yields that
$$
a \in \bigcap_{N>0} x^NA^{\wedge}=0,
$$
as desired.

(ii): Applying (i) to the case $x=p$, together with the fact that $A/pA$ is a perfect $\mathbf{F}_p$-algebra, we obtain an isomorphism $A^{\wedge} \cong \mathbf{W}(A/pA)$ by Proposition \ref{prop1} and Proposition \ref{Strooker}. Hence it induces a canonical homomorphism:
$$
A \to A^{\wedge} \cong \mathbf{W}(A/pA)
$$
and this is injective when $A$ is $p$-adically separated.

(iii): Since (\cite{Shim2}; Lemma 2.2) is slightly vague, we take this opportunity to prove its corrected version. However, other results as well as the main theorem in \cite{Shim2} will be valid under the modified version.

First, assume that $A$ is Noetherian. In this case, the proof of (\cite{Shim2}; Lemma 2.2) goes through without any change by invoking the Artin-Rees Lemma.

Assume that the second condition holds. Then by letting $A_n:=A/x^nA$, there is a short exact sequence of $A$-modules: $0 \to A_{n-1} \xrightarrow{ \cdot x} A_n \to A_1 \to 0$, because $x$ is $A$-regular by assumption. Let $M^J$ denote the $J$-adic completion for an $A$-module $M$. Then we want to show that the $J$-adic completion induces a short exact sequence:
$$
0 \to A_{n-1}^J \to A_n^J \to A_1^J \to 0 \cdots(*).
$$
Since the topology defined by the decreasing filtration $(x_1,\ldots,x_d) \supset (x_1^2,\ldots,x_d^2) \supset \cdots$ is the same as the $J$-adic topology, we have an isomorphism $M^J \cong \varprojlim_{k \in \mathbf{N}} M/(x_1^k,\ldots,x_d^k)M$. Then by assumption, $(x,x_1^k,\ldots,x_d^k)$ is a regular sequence in $A$ for any $k>0$, the snake lemma provides a short exact sequence:
$$
0 \to A_{n-1}/(x_1^k,\ldots,x_d^k)A_{n-1} \to A_n/(x_1^k,\ldots,x_d^k)A_n \to A_1/(x_1^k,\ldots,x_d^k)A_1 \to 0.
$$
Taking the inverse limit of this sequence with respect to $k$, we obtain $(*)$, as required. Now there is a commutative diagram:
$$
\begin{CD}
0 @>>> A_{n-1} @>>> A_n @>>> A_1 @>>> 0 \\
@. @VVV @VVV @VVV \\
0 @>>> A_{n-1}^J @>>> A_n^J @>>> A_1^J  @>>> 0 \\
\end{CD}
$$
The snake lemma implies that $A_n=A/x^nA$ is $J$-adically complete by inductive hypothesis on $n$. Now we have
$$
\varprojlim_{k \in \mathbf{N}} A/(xA+J)^{k} \cong \varprojlim_{m,n \in \mathbf{N}} A/(x^nA+J^m) \cong \varprojlim_{n \in \mathbf{N}}\big(\varprojlim_{m \in \mathbf{N}} A/(x^nA+J^m)\big) \cong \varprojlim_{n \in \mathbf{N}} A/x^nA \cong A,
$$
proving the lemma.
\end{proof}

\begin{remark}
Completion functor behaves nicely in the category of finitely generated modules over Noetherian rings. In general, taking completion of a ring $A$ with respect to an infinitely generated ideal $I \subset A$ can give rise to a pathology. Namely, $I$-adic completion of $A$ is not necessarily $I$-adically complete. Such an example is worked out in detail in (\cite{Yek}; Example 1.8).
\end{remark}

\section{Witt vectors of perfect algebras and main results}

We first recall the definition of the absolute integral closure and refer to \cite{Ar} for more details and properties.

\begin{definition}
Let $B$ be an integral domain. The \textit{absolute integral closure} of $B$ is defined as the integral closure of $B$ in a fixed algebraic closure of the field of fractions of $B$. Denote by $B^+$ this extension ring of $B$. 
\end{definition}

The homological aspect of absolute integral closure of Noetherian domains is explored extensively in \cite{As}. A \textit{normal ring} is a ring $B$ such that the localization of $B$ at every prime ideal is an integrally closed domain in its field of fractions. In what follows, $A^{\wedge}$ will be the $p$-adic completion of a ring $A$.

\begin{proposition}
\label{WittVector}
Let $B$ be a perfect $\mathbf{F}_p$-domain with field of fractions $\mathcal{K}$. Then there is a natural ring isomorphism:
$$
(\mathbf{W}(B)_{(p)})^{\wedge} \cong \mathbf{W}(\mathcal{K}),
$$ 
where $\mathbf{W}(B)_{(p)}$ is the localization at the prime ideal $(p)$. In particular, $\mathbf{W}(B)_{(p)}$ is an unramified discrete valuation ring of mixed characteristic $p>0$.
\end{proposition}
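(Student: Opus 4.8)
The plan is to first describe $\mathbf{W}(B)_{(p)}$ as a discrete valuation ring by realizing it inside $\mathbf{W}(\mathcal{K})$, and then to recover $\mathbf{W}(\mathcal{K})$ as its $p$-adic completion. First I would use the inclusion $B\hookrightarrow\mathcal{K}$: by functoriality of the Witt construction together with the argument already used in the proof of Lemma~\ref{p-adic}, it induces an injection $\iota_{0}\colon\mathbf{W}(B)\hookrightarrow\mathbf{W}(\mathcal{K})$ compatible with the projections to $B$ and $\mathcal{K}$, and $\mathbf{W}(\mathcal{K})$ is a complete discrete valuation ring with uniformizer $p$ and residue field $\mathcal{K}$. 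Since $\mathbf{W}(B)/p\mathbf{W}(B)\cong B$ is a domain, $(p)$ is prime in $\mathbf{W}(B)$, and for $s\in\mathbf{W}(B)\setminus(p)$ the image of $\iota_{0}(s)$ in $\mathcal{K}$ is nonzero, hence $\iota_{0}(s)$ is a unit of $\mathbf{W}(\mathcal{K})$. Therefore $\iota_{0}$ extends to an injection $\iota\colon\mathbf{W}(B)_{(p)}\hookrightarrow\mathbf{W}(\mathcal{K})$, which is natural in $B$.

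Next I would verify that $\mathbf{W}(B)_{(p)}$ is a discrete valuation ring. It is a local domain whose maximal ideal is $p\,\mathbf{W}(B)_{(p)}$, a principal ideal generated by the nonzerodivisor $p$ (recall that $\mathbf{W}(B)$ is $p$-torsion free). As $\mathbf{W}(B)$ is not Noetherian one cannot invoke Krull's intersection theorem here; instead the embedding $\iota$ gives $\bigcap_{n>0}p^{n}\mathbf{W}(B)_{(p)}\subseteq\bigcap_{n>0}p^{n}\mathbf{W}(\mathcal{K})=0$, since $\mathbf{W}(\mathcal{K})$ is $p$-adically separated. A local domain with principal maximal ideal $(\pi)$, $\pi$ a nonzerodivisor, and $\bigcap_{n}(\pi^{n})=0$ is a discrete valuation ring (assign to each nonzero element the largest $n$ for which it lies in $(\pi^{n})$, and check this is a discrete valuation with the given ring as its valuation ring). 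Applying this with $\pi=p$ shows that $\mathbf{W}(B)_{(p)}$ is a discrete valuation ring with uniformizer $p$; its residue field is $\mathbf{W}(B)_{(p)}/p\,\mathbf{W}(B)_{(p)}\cong\Frac(B)=\mathcal{K}$, so it is unramified of mixed characteristic $p>0$, which is the last assertion of the proposition.

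Finally I would pass to $p$-adic completions. Being a discrete valuation ring, $\mathbf{W}(B)_{(p)}$ is Noetherian, so by Proposition~\ref{Strooker} its $p$-adic completion $(\mathbf{W}(B)_{(p)})^{\wedge}$ is $p$-adically complete and separated with $(\mathbf{W}(B)_{(p)})^{\wedge}/p\cong\mathcal{K}$, and it is $p$-torsion free by Lemma~\ref{completion}(i); thus it is a $p$-adic deformation of $\mathcal{K}$. Completing $\iota$ $p$-adically (and using $\mathbf{W}(\mathcal{K})^{\wedge}=\mathbf{W}(\mathcal{K})$) yields a natural map $\widehat{\iota}\colon(\mathbf{W}(B)_{(p)})^{\wedge}\to\mathbf{W}(\mathcal{K})$ that reduces to the identity of $\mathcal{K}$ modulo $p$; since source and target are both $p$-torsion free and $p$-adically complete and separated, $\widehat{\iota}$ is an isomorphism modulo $p^{n}$ for every $n$ and hence an isomorphism (one may also simply cite the uniqueness part of Proposition~\ref{prop1}). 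The only delicate point in the whole argument is the $p$-adic separatedness of $\mathbf{W}(B)_{(p)}$ in the second step, where the failure of the Noetherian property of $\mathbf{W}(B)$ makes the detour through $\mathbf{W}(\mathcal{K})$ necessary.
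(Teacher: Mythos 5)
Your proof is correct, but it reaches the conclusion by a different mechanism than the paper. The paper works with explicit Witt representations: using the fact that $p^i\mid x$ iff the first $i$ Teichm\"uller coordinates of $x$ vanish, it shows every nonzero element of $\mathbf{W}(B)_{(p)}$ is a unit times a power of $p$, hence every ideal is $(p^t)$; this gives Noetherianness and the DVR property in one stroke, and the isomorphism $(\mathbf{W}(B)_{(p)})^{\wedge}\cong\mathbf{W}(\mathcal{K})$ is then quoted from the classical structure theory of complete unramified discrete valuation rings with perfect residue field. You instead avoid coordinates entirely: you embed $\mathbf{W}(B)_{(p)}$ into $\mathbf{W}(\mathcal{K})$ (as in Lemma~\ref{p-adic}), import $p$-adic separatedness from the target to substitute for the Krull intersection theorem, and invoke the abstract criterion that a local domain with principal maximal ideal and zero intersection of its powers is a DVR; the identification of the completion is then done by d\'evissage or by the uniqueness of $p$-adic deformations (Proposition~\ref{prop1}), which is essentially the same classification result the paper cites, repackaged. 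The paper's coordinate argument buys a little more explicitly (separatedness of $\mathbf{W}(B)$ and the valuation are visible directly from the Witt expansion), while your route is more robust and coordinate-free, and it isolates cleanly the one delicate point—separatedness of the localization—which the paper handles only implicitly through the unit criterion for Witt vectors. Minor polish: when applying the DVR criterion, note explicitly that $p$ is a nonunit of $\mathbf{W}(B)_{(p)}$ (i.e.\ the ring is not a field), which holds since $\mathbf{W}(B)/p\mathbf{W}(B)\cong B\neq 0$.
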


\begin{proof}
Put $\mathcal{B}=\mathbf{W}(B)$. Then this is an integral domain. We first prove that $\mathcal{B}_{(p)}$ is Noetherian. Let $I \subseteq \mathcal{B}_{(p)}$ be an ideal and show that $I$ is finitely generated. Let $\{x_{\lambda}\}$ be a set of generators of $I$, all of which belong to $\mathcal{B}$. Since every element $x \in \mathcal{B}$ is uniquely represented as
$$
x=\sum_{n=0}^{\infty}[a_i]p^i
$$
for the Teichm\"uller lift $[a_i] \in \mathcal{B}$, it follows that 
$$
p^i | x \iff a_0=\cdots a_{i-1}=0.
$$
Moreover, $x$ is a unit of $\mathcal{B}_{(p)}$ if and only if $a_0 \ne 0$.
Let $t$ be the smallest choice of $i$ for which $a_i \ne 0$ for all $x_{\lambda} \in I \subset \mathcal{B}_{(p)}$. Then we have $I=(p^t) \subset \mathcal{B}_{(p)}$. It follows that $\mathcal{B}_{(p)}$ is a Noetherian local domain, but not a field. The above observation shows that every ideal of $\mathcal{B}_{(p)}$ is principal and hence it is a discrete valuation ring. Since $\mathcal{B}_{(p)}$ has perfect residue field $\mathcal{K}$ and its maximal ideal is generated by $p$, it follows that
$$
(\mathcal{B}_{(p)})^{\wedge} \cong \mathbf{W}(\mathcal{K})
$$
by the theory of Witt vectors for perfect fields.
\end{proof}

Let $A$ be a ring of characteristic $p>0$. The \textit{perfect closure} of $A$, which we denote by $A^{\perf}$, is defined to be the colimit of the directed system:
$$
A \xrightarrow{\Frob} A \xrightarrow{\Frob} A \xrightarrow{\Frob} \cdots.
$$
Note that if $A$ is reduced, then $A^{\perf}=\bigcup_{n>0} A^{p^{-n}}$, where $A^{p^{-n}}$ is a ring extension of $A$ obtained by adjoining $p^n$-th roots of all elements of $A$. By definition, the Frobenius map is bijective on $A^{\perf}$. The following lemma enables us to identify the complete regular local ring of mixed characteristic as a subring of the Witt vectors of the absolute integral closure.

\begin{lemma}
\label{regular}
Let $A=k[[z_2,\ldots,z_d]]$ with perfect residue field $k$ of characteristic $p>0$. Then $A$ admits a unique $p$-adic deformation $R$. Moreover, $R$ is a complete regular local ring and the natural inclusion $A \hookrightarrow A^{\perf}$ lifts to a unique $\mathbf{W}(k)$-algebra homomorphism 
$$
R \to \mathbf{W}(A^{\perf}).
$$
\end{lemma}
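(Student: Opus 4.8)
The plan is to take $R:=\mathbf{W}(k)[[z_2,\dots,z_d]]$, the formal power series ring over the complete discrete valuation ring $\mathbf{W}(k)$, and to establish the three assertions in turn. First one checks that this $R$ is a $p$-adic deformation of $A$: it is $p$-torsion free because $\mathbf{W}(k)$ is; one has $R/pR\cong(\mathbf{W}(k)/p\mathbf{W}(k))[[z_2,\dots,z_d]]=A$; and $R$ is $p$-adically complete and separated because $R/p^nR\cong(\mathbf{W}(k)/p^n\mathbf{W}(k))[[z_2,\dots,z_d]]$ and inverse limits commute with the products defining formal power series. Being a power series ring over a complete discrete valuation ring, $R$ is a complete regular local ring of dimension $d$ and of mixed characteristic $p>0$, which settles the existence statement and the regularity statement.

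Next one proves uniqueness of the $p$-adic deformation (Proposition \ref{prop1} does not apply directly, since $A$ is not perfect once $d\ge 2$). Let $R'$ be any $p$-adic deformation and fix an identification $R'/pR'=A=k[[z_2,\dots,z_d]]$. Since $k\subseteq A$ is a perfect subfield and $R'$ is $p$-torsion free and $p$-adically complete, the universal property of the Witt vectors of a perfect ring (see \cite{Se}, \cite{Rab}) yields a ring homomorphism $\mathbf{W}(k)\to R'$ lifting the coefficient-field inclusion $k\hookrightarrow A$; choosing in addition lifts $\widetilde z_i\in R'$ of the $z_i$ produces a $\mathbf{W}(k)$-algebra homomorphism $\mathbf{W}(k)[z_2,\dots,z_d]\to R'$. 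As $p$ is a nonzerodivisor in $R'$ and $z_2,\dots,z_d$ is a regular sequence on the regular local ring $A=R'/pR'$, the sequence $p,\widetilde z_2,\dots,\widetilde z_d$ is regular in $R'$; hence by Lemma \ref{completion}(iii) the ring $R'$ is complete in the $(p,\widetilde z_2,\dots,\widetilde z_d)$-adic topology, and the polynomial map extends to a continuous $\mathbf{W}(k)$-algebra homomorphism $\Phi\colon R\to R'$. Reducing modulo $p$, $\Phi$ becomes a continuous $k$-algebra endomorphism of $A$ fixing every $z_i$, hence the identity; therefore $\Phi$ is an isomorphism modulo $p^n$ for each $n$ (induction with the snake lemma, using $p$-torsion freeness), and so $\Phi$ is itself an isomorphism. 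Thus $R'\cong R$.

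For the third assertion, note that $A^{\perf}=\bigcup_{n>0}k[[z_2^{1/p^n},\dots,z_d^{1/p^n}]]$ is a perfect $\mathbf{F}_p$-domain, being a filtered colimit of domains along the injective Frobenius, so $\mathbf{W}(A^{\perf})$ is a $p$-torsion free, $p$-adically complete domain with $\mathbf{W}(A^{\perf})/p\mathbf{W}(A^{\perf})=A^{\perf}$, carrying the Teichm\"uller map. Functoriality of $\mathbf{W}(-)$ applied to $k\hookrightarrow A$ gives $\mathbf{W}(k)\to\mathbf{W}(A)$, and together with $z_i\mapsto[z_i]\in\mathbf{W}(A)$ this defines a $\mathbf{W}(k)$-algebra homomorphism $\mathbf{W}(k)[z_2,\dots,z_d]\to\mathbf{W}(A)$. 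To extend it over $R$ one passes to the $(z_2,\dots,z_d)$-adic completion $C$ of $A^{\perf}$, which is again perfect by Lemma \ref{adic} and, as $(z_2,\dots,z_d)$ is generated by a regular sequence on $A^{\perf}$, is $(z_2,\dots,z_d)$-adically complete; then $p,[z_2],\dots,[z_d]$ is a regular sequence in $\mathbf{W}(C)$, Lemma \ref{completion}(iii) makes $\mathbf{W}(C)$ complete in the $(p,[z_2],\dots,[z_d])$-adic topology, and the polynomial homomorphism (composed with $\mathbf{W}(A)\hookrightarrow\mathbf{W}(C)$) extends to a $\mathbf{W}(k)$-algebra map $R\to\mathbf{W}(C)$. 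The remaining point is that this map factors through $\mathbf{W}(A^{\perf})$: the image of any $f\in R$ is a limit of elements of $\mathbf{W}(A)$, and since multiplication by a Teichm\"uller element is coordinatewise and $A=k[[z_2,\dots,z_d]]$ is $(z_2,\dots,z_d)$-adically complete and closed in $C$, every Witt coordinate of this image lies in $A$; hence the map factors through $\mathbf{W}(A)\subseteq\mathbf{W}(A^{\perf})$. The resulting homomorphism $R\to\mathbf{W}(A^{\perf})$ reduces modulo $p$ to a continuous ring map $A\to A^{\perf}$ restricting to $k\hookrightarrow A^{\perf}$ and sending each $z_i$ to $z_i$, i.e.\ to the natural inclusion; it is the desired lift, and is the unique $\mathbf{W}(k)$-algebra homomorphism sending each $z_i$ to its Teichm\"uller representative $[z_i]$.

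The step I expect to be the main obstacle is the last one. In contrast with $A$, the perfect closure $A^{\perf}$ is \emph{not} $(z_2,\dots,z_d)$-adically complete, so one cannot extend the power series homomorphism directly inside $\mathbf{W}(A^{\perf})$; the proof is forced to detour through the completion $C$ (equivalently, through $\mathbf{W}(A)$), and the verification that the extended homomorphism nevertheless lands in $\mathbf{W}(A^{\perf})$ — resting on the closedness of $A$ in $C$ and on the coordinatewise form of multiplication by Teichm\"uller elements — is the genuinely technical part, as is the (routine but non-Noetherian) bookkeeping required to apply Lemma \ref{completion}(iii).
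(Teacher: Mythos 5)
Your handling of the first two assertions (existence of $R=\mathbf{W}(k)[[z_2,\dots,z_d]]$ and uniqueness of the $p$-adic deformation via the universal property of $\mathbf{W}(k)$, lifting the $z_i$, Lemma \ref{completion}(iii), and the mod-$p$ rigidity argument) is correct, and in fact more detailed than the paper, which essentially asserts these facts.

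For the map $R\to\mathbf{W}(A^{\perf})$ you take a genuinely different, and much more laborious, route than the paper. The paper introduces the explicit ring $R_{\infty}=\bigcup_{n>0}\mathbf{W}(k)[[x_2^{p^{-n}},\dots,x_d^{p^{-n}}]]$ with $x_i=[z_i]$: it is $p$-torsion free with $R_{\infty}/pR_{\infty}\cong A^{\perf}$ perfect, so by the uniqueness of $p$-adic deformations of perfect algebras (Proposition \ref{prop1}, together with Lemma \ref{completion}(ii)) the $p$-adic completion $R_{\infty}^{\wedge}$ \emph{is} $\mathbf{W}(A^{\perf})$, and the desired homomorphism is simply the composite $R\subset R_{\infty}\to R_{\infty}^{\wedge}\cong\mathbf{W}(A^{\perf})$; no convergence-of-Witt-coordinates argument is needed because $R$ sits inside $R_{\infty}$ by construction. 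Your detour through $\mathbf{W}(C)$, with $C$ the $(z_2,\dots,z_d)$-adic completion of $A^{\perf}$, can be pushed through, but the step you yourself flag as the crux is where your justification is genuinely incomplete: to see that the continuity-extension $R\to\mathbf{W}(C)$ has all its Teichm\"uller digits in $A$, it does not suffice that multiplication by Teichm\"uller elements is coordinatewise, since the digits of a limit are governed by Witt \emph{addition}, whose carry polynomials involve $p$-power roots of the digits. One must verify that if the digits of $u-u_n$ lie in $(z_2,\dots,z_d)^{N-i}C$ (digit index $i$), then the $i$-th digit of $u$ lies in $A+(z_2,\dots,z_d)^{M}C$ with $M\to\infty$ as $N\to\infty$ (the exponent gets divided by $p^{i}$ because of the roots appearing in the carries), and then use that $A$ is closed in $C$. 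That estimate is true, but it is exactly the content you leave unproved, so as written this step is a gap; the paper's $R_{\infty}$ construction is the clean way to avoid it. Finally, note that the uniqueness of the lift can only be pinned down as you state it (unique $\mathbf{W}(k)$-algebra map with $z_i\mapsto[z_i]$); the paper's proof is no more precise on this point.
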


\begin{proof}
Indeed, $R$ is a complete regular local ring which is isomorphic to $\mathbf{W}(k)[[x_2,\ldots,x_d]]$ with $x_i:=[z_i]$. By the uniqueness of the existence of $p$-adic deformations for perfect algebras, it follows that $\mathbf{W}(A^{\perf})$ is isomorphic to the $p$-adic completion of the algebra:
$$
\bigcup_{n>0} \mathbf{W}(k)[[x_2^{p^{-n}},\ldots,x_d^{p^{-n}}]],
$$
and we get a $\mathbf{W}(k)$-algebra homomorphism $R \to \mathbf{W}(A^{\perf})$. 
\end{proof}

Let $A \to A^+$ be the absolute integral closure for $A=k[[x_2,\ldots,x_d]]$ for a perfect field $k$ of characteristic $p>0$. Then by Lemma \ref{regular}, there is a unique $\mathbf{W}(k)$-algebra homomorphism: 
$$
R=\mathbf{W}(k)[[x_2,\ldots,x_d]] \to \mathbf{W}(A^{\perf}) \to \mathbf{W}(A^+)
$$
which lifts the ring homomorphism $A \to A^{\perf} \to A^+$. Since $A$ is a complete local domain, all of these algebras are quasi-local (a \textit{quasi-local} ring is a ring with a unique maximal ideal). Now we construct a big ring.

\begin{enumerate}
\item[$\bullet$]
Let 
$$
R_{\infty}:=\bigcup_{n>0}\mathbf{W}(k)[[x_2^{p^{-n}},\ldots,x_d^{p^{-n}}]].
$$
Then $R_{\infty}$ is contained in $R^+$ and the Frobenius map $\Frob:R_{\infty}/pR_{\infty} \to R_{\infty}/pR_{\infty}$ is an isomorphism and $R_{\infty}/pR_{\infty}$ is an integral domain.
\end{enumerate}

Moreover, $R_{\infty}$ is a non-Noetherian quasi-local domain. The extension $R \to R_{\infty}$ is integral and faithfully flat, because $R_{\infty}$ is obtained as the filtered colimit of module-finite extensions of regular local rings and each transition map is obviously flat. We keep to use the notation $[a] \in \mathbf{W}(A)$ for the Teichm\"uller lift of $a \in A$. Let $A^{\sh}$ denote the \textit{strict henselization} of a quasi-local ring $A$. For this, we refer the reader to \cite{Ray}.

\begin{lemma}
\label{BigMac}
Let the notation be as above. Then the $R$-algebra $\mathbf{W}(A^+)$ is a big Cohen-Macaulay algebra. In other words, the sequence $(p,x_2,\ldots,x_d)$ is regular on $\mathbf{W}(A^+)$.
\end{lemma}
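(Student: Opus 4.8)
The plan is to reduce the statement modulo $p$ and then appeal to the theorem of Hochster and Huneke that the absolute integral closure of a complete local domain of characteristic $p>0$ is a big Cohen--Macaulay algebra. First I would record the two structural facts about $\mathbf{W}(A^+)$ already collected in Section 3: it is $p$-torsion free, so that $p$ is a nonzerodivisor on $\mathbf{W}(A^+)$, and the canonical surjection $\pi_{A^+}\colon\mathbf{W}(A^+)\twoheadrightarrow A^+$ has kernel $p\,\mathbf{W}(A^+)$. By the construction preceding this lemma, the $\mathbf{W}(k)$-algebra map $R=\mathbf{W}(k)[[x_2,\dots,x_d]]\to\mathbf{W}(A^+)$ sends $x_i=[z_i]$ to the Teichm\"uller lift of $z_i$, which $\pi_{A^+}$ carries to the image of $z_i$ in $A\subseteq A^+$; writing $x_i$ also for that image, one obtains for every $j$ a ring isomorphism
$$
\mathbf{W}(A^+)/(p,x_2,\dots,x_j)\mathbf{W}(A^+)\;\cong\;A^+/(x_2,\dots,x_j)A^+ .
$$

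Next I would bring in the big Cohen--Macaulay property of the absolute integral closure. Since $A=k[[x_2,\dots,x_d]]$ is a complete, hence excellent, local domain of characteristic $p>0$ with system of parameters $x_2,\dots,x_d$, the Hochster--Huneke theorem (compare \cite{As}) tells us that $A^+$ is a balanced big Cohen--Macaulay $A$-algebra; in particular $x_2,\dots,x_d$ is a regular sequence on $A^+$ and $(x_2,\dots,x_d)A^+\neq A^+$. Combining this with the first step via the elementary criterion for extending a regular sequence: $p$ is a nonzerodivisor on $\mathbf{W}(A^+)$; for each $j\in\{2,\dots,d\}$ multiplication by $x_j$ on $\mathbf{W}(A^+)/(p,x_2,\dots,x_{j-1})\mathbf{W}(A^+)\cong A^+/(x_2,\dots,x_{j-1})A^+$ is injective because $x_2,\dots,x_d$ is $A^+$-regular; and $(p,x_2,\dots,x_d)\mathbf{W}(A^+)\neq\mathbf{W}(A^+)$ because its image in $A^+$ is the proper ideal $(x_2,\dots,x_d)A^+$. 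Hence $(p,x_2,\dots,x_d)$ is a regular sequence on $\mathbf{W}(A^+)$, which is exactly the assertion that $\mathbf{W}(A^+)$ is a big Cohen--Macaulay $R$-algebra.

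In this argument the only substantive ingredient is the Hochster--Huneke big Cohen--Macaulay property of $A^+$; everything else is formal manipulation with the isomorphism $\mathbf{W}(A^+)/p\cong A^+$ and $p$-torsion-freeness. So I do not expect a genuine obstacle here; the one point deserving care is to make sure that $R\to\mathbf{W}(A^+)$ genuinely sends $x_2,\dots,x_d$ to elements that reduce modulo $p$ to a system of parameters of $A$ sitting inside $A^+$, and this is guaranteed by Lemma \ref{regular} together with the functoriality of Witt vectors recalled in Section 3.
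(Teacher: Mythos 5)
Your argument is correct and is essentially the paper's own proof: both reduce modulo $p$ via $\mathbf{W}(A^+)/p\mathbf{W}(A^+)\cong A^+$, invoke the Hochster--Huneke theorem that $A^+$ is a big Cohen--Macaulay $A$-algebra, and use $p$-torsion-freeness of $\mathbf{W}(A^+)$ to prepend $p$ to the regular sequence. The paper states this in two lines; you have merely written out the standard lifting criterion in detail, which is fine.
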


\begin{proof}
By the main result of \cite{HH}, $A^+$ is a big Cohen-Macaulay $A$-algebra. Then since $\mathbf{W}(A^+)$ is $p$-torsion free, it follows that $(p,x_2,\ldots,x_d)$ is a regular sequence on $\mathbf{W}(A^+)$.
\end{proof}

\begin{lemma}
\label{separated}
Assume that $B$ is a complete Noetherian local domain, $I$ is its maximal ideal, and let $B \subset C \subset B^+$ be a subalgebra. Then $C$ is $I$-adically separated.
\end{lemma}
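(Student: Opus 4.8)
The plan is to deduce $\bigcap_{n\ge 1} I^n C = 0$ from the existence of a single rank-one valuation on $\Frac(B^+)$ that is nonnegative on $B^+$ and bounded below by a positive constant on $I$. First I would apply Cohen's structure theorem to choose a complete regular local subring $R\subseteq B$ over which $B$ is module-finite (so $R\cong k[[z_1,\dots,z_d]]$ in the equicharacteristic case and $R\cong V[[z_2,\dots,z_d]]$ with $V$ a complete DVR in the mixed-characteristic case). Since $R$ is regular local, $\gr_{\fm_R}(R)$ is a polynomial ring over the residue field, hence a domain, so the $\fm_R$-adic order function extends to a rank-one discrete valuation $v$ on $\Frac(R)$ with $v(R)\ge 0$ and $v(\fm_R)\ge 1$. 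As $B$ is integral over $R$ we have $\overline{\Frac(B)}=\overline{\Frac(R)}$ and $B^+=R^+$, so from now on it suffices to work with $R$, $R^+$ and $\overline{\Frac(R)}$.

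Next I would extend $v$ to a valuation $w$ on $\overline{\Frac(R)}=\Frac(R^+)$; since this extension is algebraic, $w$ is again of rank one, with value group contained in $\mathbf{Q}$. I then record two facts. (a) $w\ge 0$ on $B^+$: the valuation ring of $w$ is integrally closed in $\overline{\Frac(R)}$ and contains $R$, hence it contains $R^+=B^+\supseteq C$. (b) There is a constant $\gamma>0$ with $w(x)\ge\gamma$ for every nonzero $x\in I=\fm_B$: the ideals $\fm_R B$ and $\fm_B$ have the same radical $\fm_B$ (because $B/\fm_R B$ is zero-dimensional and local), so by Noetherianity $\fm_B^N\subseteq\fm_R B$ for some $N$; writing $x^N=\sum_i r_ib_i$ with $r_i\in\fm_R$ and $b_i\in B\subseteq R^+$ gives $N\,w(x)=w(x^N)\ge\min_i\bigl(w(r_i)+w(b_i)\bigr)\ge 1$, so $w(x)\ge 1/N=:\gamma$.

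Finally, combining (a) and (b): any element of $I^nC$ is a finite sum $\sum_k z_kc_k$ with $z_k\in I^n$ and $c_k\in C\subseteq B^+$, and $z_k$ is a $B$-combination of $n$-fold products from $\fm_B$, so $w(z_kc_k)\ge n\gamma$ and therefore $w\bigl(\sum_k z_kc_k\bigr)\ge n\gamma$. Hence any $c\in\bigcap_{n\ge 1}I^nC$ satisfies $w(c)\ge n\gamma$ for all $n$; since $w$ has rank one and $\gamma>0$ this forces $w(c)=\infty$, i.e.\ $c=0$, so $C$ is $I$-adically separated. The only delicate point is the construction of the auxiliary valuation $v$ on $B$ with center exactly $\fm_B$ together with its nonnegative extension to $B^+$; once this is set up, the rest is a routine valuation estimate. (Completeness of $B$ is used only to invoke the structure theorem — alternatively one could take $v$ to be a Rees valuation of $\fm_B$, which exists for any Noetherian local domain.)
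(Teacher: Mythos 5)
Your argument is correct. The paper itself does not prove this lemma; it simply cites Hochster (\cite{Ho02}, Fact 2.4), and your valuation-theoretic proof is essentially the standard argument underlying that citation: produce a rank-one valuation $w$ on $\Frac(B^+)$ with $w\ge 0$ on $B^+$ and $w\ge\gamma>0$ on $\fm_B\setminus\{0\}$, and conclude that every element of $\bigcap_n I^nC$ has infinite value. All the steps check out: the order valuation of the Cohen subring $R$ is a genuine discrete valuation because $\gr_{\fm_R}(R)$ is a domain; its extension to the algebraic closure has value group in $\mathbf{Q}$, hence rank one; its valuation ring is integrally closed and contains $R$, hence contains $R^+=B^+$; and the uniform bound $w\ge 1/N$ on $\fm_B$ follows from $\fm_B^N\subseteq\fm_R B$, which holds since $B/\fm_R B$ is Artinian local. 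Your closing remark is also accurate: with a Rees valuation of $\fm_B$ in place of the order valuation of $R$, completeness of $B$ is not needed, so your argument in fact proves a slightly more general statement than the one quoted.
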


\begin{proof}
This is found in (\cite{Ho02}; Fact 2.4).
\end{proof}

\begin{proposition}
\label{WittVector2}
Let $K$ be the field of fractions of $A=k[[z_2,\ldots,z_d]]$ for a perfect field $k$ of characteristic $p>0$ and let $K^+$ be the algebraic closure of $K$.

\begin{enumerate}
\item[$\mathrm{(i)}$]
Let $\mathcal{A}:=(R_{\infty})_{(p)}$ be the localization of $R_{\infty}$ at the height-one prime ideal $(p) \subset R_{\infty}$. Then $\mathcal{A}$ is a discrete valuation ring which injects into $\mathbf{W}(K^+)$ and $\mathbf{W}(K^+)$ is the $p$-adic completion of the strict henselization of the $p$-adic completion of $\mathcal{A}$, that is,
$$
\mathbf{W}(K^+) \cong ((\mathcal{A}^{\wedge})^{\sh})^{\wedge}.
$$

\item[$\mathrm{(ii)}$]
If $I$ is any ideal of $R$, then the $R$-algebra $\mathbf{W}(A^+)$ is $I$-adically separated.

\end{enumerate}
\end{proposition}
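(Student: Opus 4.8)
For part (i), the plan is to reconstruct $\mathbf{W}(K^+)$ from $\mathcal{A}$ by tracing through the three operations on the right-hand side. Each $R_n:=\mathbf{W}(k)[[x_2^{p^{-n}},\ldots,x_d^{p^{-n}}]]$ is a complete regular local ring, hence a UFD in which $p$ is prime, and $R_\infty=\varinjlim_n R_n$ with $R_n\hookrightarrow R_\infty$ integral and faithfully flat, so that $(p)R_\infty\cap R_n=(p)R_n$; it follows that every nonzero element of $\mathcal{A}=(R_\infty)_{(p)}$ is $p^j$ times a unit, so $\mathcal{A}$ is a discrete valuation ring with uniformizer $p$ and residue field $\Frac(R_\infty/pR_\infty)=\Frac(A^{\perf})=K^{1/p^\infty}$, the perfect closure of $K$. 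Then $\mathcal{A}^\wedge$ is a complete discrete valuation ring with uniformizer $p$ and perfect residue field, hence $\mathcal{A}^\wedge\cong\mathbf{W}(K^{1/p^\infty})$ by the theory of Witt vectors of perfect fields (cf.\ the proof of Proposition \ref{WittVector}). The strict henselization $(\mathcal{A}^\wedge)^{\sh}$ is the filtered colimit of the unramified $\mathcal{A}^\wedge$-algebras $\mathbf{W}(L)$, $L$ running over the finite separable extensions of $K^{1/p^\infty}$, so it is again a discrete valuation ring with uniformizer $p$, and its residue field is the separable closure of $K^{1/p^\infty}$, which is $\overline{K}=K^+$ since $K^{1/p^\infty}$ is perfect. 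Finally $((\mathcal{A}^\wedge)^{\sh})^\wedge$ is $p$-torsion free, $p$-adically complete and separated, and reduces to $K^+$ modulo $p$, so by the uniqueness of $p$-adic deformations (Proposition \ref{prop1}) it is isomorphic to $\mathbf{W}(K^+)$; moreover the composite $\mathcal{A}\hookrightarrow\mathcal{A}^\wedge\hookrightarrow(\mathcal{A}^\wedge)^{\sh}\hookrightarrow((\mathcal{A}^\wedge)^{\sh})^\wedge\cong\mathbf{W}(K^+)$ is injective, since the first and third maps are $p$-adic completions of $p$-adically separated Noetherian rings and the middle one is faithfully flat.

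For part (ii), since $R$ is local with maximal ideal $\fm=(p,x_2,\ldots,x_d)$, every ideal $I$ of $R$ lies in $\fm$, so it suffices to show that $S:=\mathbf{W}(A^+)$ is $\fm$-adically separated; the plan is to run the argument of Lemma \ref{completion}(iii) while keeping track of separatedness rather than completeness. Write $J=(x_2,\ldots,x_d)S$. We use: $S$ is $p$-adically complete (a basic property of Witt vectors); $S/pS\cong A^+$ is a domain in which $(z_2,\ldots,z_d)$ is a regular sequence (the big Cohen-Macaulay property of $A^+$ over $A$, cf.\ Lemma \ref{BigMac}); and $A^+$ is $J$-adically separated, which is Lemma \ref{separated} applied to the complete Noetherian local domain $A=k[[z_2,\ldots,z_d]]$ with maximal ideal $(z_2,\ldots,z_d)$ and the subring $A\subseteq A^+$. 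First I would prove, by induction on $n$, that $S/p^nS$ is $J$-adically separated. Applying $J$-adic completion to the exact sequence $0\to S/p^{n-1}S\xrightarrow{\,\cdot p\,}S/p^nS\to S/pS\to 0$ gives, as in Lemma \ref{completion}(iii), an exact sequence $0\to(S/p^{n-1}S)^J\to(S/p^nS)^J\to(S/pS)^J\to 0$ — here one dismantles the sequence by dividing successively by $x_2^k,\ldots,x_d^k$ for each $k$ and passing to the inverse limit, the requisite injectivities holding because the torsion obstructions all live on quotients of the domain $A^+$ and vanish by the regular sequence property. Then the snake lemma for the canonical completion maps shows that $\bigcap_m J^m(S/p^nS)=\ker(S/p^nS\to(S/p^nS)^J)$ embeds into $\ker(S/pS\to(S/pS)^J)=0$, given $\ker(S/p^{n-1}S\to(S/p^{n-1}S)^J)=0$ by induction, which closes the induction. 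Finally, $\fm^{2N}S\subseteq p^NS+J^NS$ forces any $y\in\bigcap_N\fm^NS$ into $p^{N+m}S+J^{N+m}S\subseteq p^NS+J^mS$ for all $N,m$; so for each $N$ the class of $y$ lies in $\bigcap_m J^m(S/p^NS)=0$, whence $y\in p^NS$, and then $y\in\bigcap_N p^NS=0$ since $S$ is $p$-adically separated.

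The step I expect to be the crux of (ii) is the one hidden in ``as in Lemma \ref{completion}(iii)'': one cannot invoke that lemma directly, since $A^+$ is only $J$-adically separated and not $J$-adically complete, so the argument must be re-run for separatedness and the exactness of the $J$-completed three-term sequence checked by hand. What makes it work — and what I would verify at the outset — is that $S/pS\cong A^+$ is a domain in which $(z_2,\ldots,z_d)$ is a regular sequence (Lemma \ref{BigMac}): this lets one tensor $0\to S/p^{n-1}S\to S/p^nS\to S/pS\to 0$ with $S/(x_i^k)S$ one variable at a time, each time using only that $x_i$ is a nonzerodivisor on the domain $S$ and that $z_i^k$ is a nonzerodivisor on the appropriate quotient of $A^+$, so that no permutation of the regular sequence $(p,x_2,\ldots,x_d)$ is needed. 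In (i) the analogous crux is lighter: it is just the final identification $((\mathcal{A}^\wedge)^{\sh})^\wedge\cong\mathbf{W}(K^+)$ — i.e.\ that $p$-adic completion does not enlarge the reduction mod $p$ beyond $K^+$ — which is settled by the uniqueness statement in Proposition \ref{prop1}.
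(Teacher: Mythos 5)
Your proof of (i) is correct and essentially the paper's own: identify $\mathcal{A}$ as an absolutely unramified discrete valuation ring whose residue field is the perfect closure of $K$, so that $\mathcal{A}^{\wedge}\cong\mathbf{W}$ of that field, describe $(\mathcal{A}^{\wedge})^{\sh}$ as $\varinjlim\mathbf{W}(L)$ over finite extensions $L$, and settle the last completion by uniqueness of $p$-adic deformations; the paper differs only in reaching the colimit description via the integral closure of $\mathcal{A}^{\wedge}$ inside $\mathbf{W}(K^+)$ rather than by naming the residue field of the strict henselization directly.

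For (ii) you take a genuinely different route, and it works. The paper does not re-run the completion argument on $\mathbf{W}(A^+)$ itself: it embeds $\mathbf{W}(A^+)$ into $\mathbf{W}((A^+)^I)$, where $(A^+)^I$ is the $I$-adic completion of $A^+$ (perfect by Lemma \ref{adic}, and receiving $A^+$ injectively by Lemma \ref{separated}), cites (\cite{BH}; Theorem 8.5.1) to see that $(p,x_2,\ldots,x_d)$ remains a regular sequence on $\mathbf{W}((A^+)^I)$, and then applies Lemma \ref{completion}(iii) verbatim to this larger ring, whose reduction mod $p$ \emph{is} $J$-adically complete; completeness, hence separatedness, of $\mathbf{W}((A^+)^I)$ forces separatedness of the subring $\mathbf{W}(A^+)$. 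You instead prove separatedness of $S=\mathbf{W}(A^+)$ directly, by induction showing each $S/p^nS$ is $J$-adically separated, checking exactness of the $J$-completed three-term sequences one variable at a time (using only that $S$ is a domain and that $(z_2^k,\ldots,z_d^k)$ is regular on $A^+$, so no permutation of $(p,x_2,\ldots,x_d)$ is needed), and then combining with $p$-adic separatedness via $\fm^{2N}\subseteq p^NS+J^NS$. Your version avoids the auxiliary perfect completion $(A^+)^I$ and the citation to \cite{BH}, at the cost of redoing the homological bookkeeping by hand; both arguments rest on the same two pillars, Lemma \ref{BigMac} and Lemma \ref{separated}, and your exactness and kernel-chase steps are carried out correctly.
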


\begin{proof}
(i): By the construction, the residue field of $\mathcal{A}=(R_{\infty})_{(p)}$ is the perfect closure of the residue field of $R_{(p)}$, and $\mathcal{A}$ is a discrete valuation ring, because it is the ascending union of discrete valuation rings, each of which has the maximal ideal generated by $p$. Then we have a sequence of ring injections $R_{(p)}/p R_{(p)} \subset \mathcal{A}/p \mathcal{A} \subset K^+$. We also note that $\mathcal{A}/p \mathcal{A}$ is a perfect field and there is an injection:
$$
\mathcal{A}^{\wedge}=\mathbf{W}(\mathcal{A}/p\mathcal{A}) \hookrightarrow \mathbf{W}(K^+),
$$ 
which is not integral, but its reduction modulo $p$ is an algebraic extension. From this, we observe that the integral closure of $\mathcal{A}^{\wedge}$ inside $\mathbf{W}(K^+)$ is the filtered colimit of all $\mathbf{W}(L)$ for finite subextensions $\mathcal{A}/p \mathcal{A} \subset L \subset K^+$. Since $L$ is a perfect field, $\mathbf{W}(L)$ is a complete discrete valuation ring and $\mathcal{A}^{\wedge} \to \mathbf{W}(L)$ is module-finite. Hence
$$
(\mathcal{A}^{\wedge})^{\sh}=\varinjlim_{\mathcal{A}/p\mathcal{A} \subset L \subset K^+} \mathbf{W}(L).
$$
Since $(\mathcal{A}^{\wedge})^{\sh}/p (\mathcal{A}^{\wedge})^{\sh}=\mathbf{W}(K^+)/p \mathbf{W}(K^+)$, we conclude that $\mathbf{W}(K^+) \cong ((\mathcal{A}^{\wedge})^{\sh})^{\wedge}$.

(ii): As previously, let $\mathcal{R}=\mathbf{W}(A^+)$. Since $I$ is contained in the maximal ideal of $R=\mathbf{W}(k)[[x_2,\ldots,x_d]]$, it suffices to show that $\mathcal{R}$ is $(p,x_2,\ldots,x_d)$-adically separated. So we may assume that $I=(p,x_2,\ldots,x_d)$ without loss of generality. Then $(p,x_2,\ldots,x_d)$ is a regular sequence on $\mathcal{R}$ in view of Lemma \ref{BigMac} and $A^+=\mathcal{R}/p\mathcal{R}$ is $I$-adically separated in view of Lemma \ref{separated}. Let $M^{I}$ denote the $I$-adic completion of an $\mathcal{R}$-module $M$. Then $A^+ \to (A^+)^I$ is injective and $(A^+)^I$ is a perfect $\mathbf{F}_p$-algebra in view of Lemma \ref{adic}. Then we obtain an injection:
$$
\mathbf{W}(A^+) \hookrightarrow \mathbf{W}((A^+)^I).
$$
Applying (\cite{BH}; Theorem 8.5.1), $(p,x_2,\ldots,x_d)$ is a regular sequence on $\mathbf{W}((A^+)^I)$. Then it follows from Lemma \ref{completion} that $\mathbf{W}((A^+)^I)$ is complete in the $I$-adic topology. In particular, it is separated in the $I$-adic topology. Therefore, $\mathbf{W}(A^+)$ is $I$-adically separated, as desired.
\end{proof}

\subsection{Normality of the Witt vectors}

Now we begin with the next question. Namely, if $B$ is a perfect integrally closed $\mathbf{F}_p$-domain, then is the ring of Witt vectors $\mathbf{W}(B)$ also integrally closed? This is a special case of a general \textit{deformation problem} which is stated as follows. Let $R$ be a ring and $y \in R$ a regular element. Assume that $R/yR$ has some property $\mathbf{P}$. Then does $R$ have $\mathbf{P}$? In the Noetherian local case, Serre's criterion allows one to deform normality, while the deformation of normality fails in the non-Noetherian case (see Example \ref{deformnormal}). We show that the normality condition is preserved under the $p$-adic deformation for certain perfect $\mathbf{F}_p$-algebras. Let us recall the definition of \textit{complete integral closure}.

\begin{definition}
Let $A$ be an integral domain with $K$ its field of fractions. Then an element $x \in K$ is \textit{almost integral over R}, if there is a nonzero element $a \in A$ such that $ax^n \in A$ for all $n \ge 1$. We say that $A$ is \textit{completely integrally closed}, if every element in $K$ which is almost integral over $A$ belongs to $A$.
\end{definition}

In fact, the set of all elements of $K$ that are almost integral over $A$ forms a subring $B$ with $A \subset B \subset K$. This we call the \textit{complete integral closure of A}. It is easy to see from the definition that if $x \in K$ is integral over $A$, then it is almost integral over $A$. The converse is true if $A$ is Noetherian. Note that an integrally closed domain is not necessarily completely integrally closed, as the following example shows.

\begin{example}
Let $V$ be a valuation domain. If $\dim V \ge 2$, then $V$ is normal, but not completely integrally closed. It is also known that $V[[x]]$ is not integrally closed, if $\dim V \ge 2$. In contrast to integral closure, the complete integral closure of an integral domain is not necessarily completely integrally closed (see \cite{Hein} for such an example).
\end{example}

The key point to show that the ring of Witt vectors is normal is to show that it is completely integrally closed, as is demonstrated below. The main idea in the proof of the following theorem is taken from (\cite{Bour}; Proposition 14 in \S 1.4).

\begin{theorem}
\label{normal}
Assume that $R$ is a Noetherian normal domain of characteristic $p>0$ and let $B$ be a perfect integrally closed $\mathbf{F}_p$-domain that is torsion free and integral over $R$. Then the ring of Witt vectors $\mathbf{W}(B)$ is an integrally closed domain.
\end{theorem}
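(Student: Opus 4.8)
The plan is to prove the stronger statement that $\mathbf{W}(B)$ is \emph{completely} integrally closed; since an element integral over a domain is automatically almost integral over it, this yields the theorem once one recalls from Lemma \ref{p-adic} that $\mathbf{W}(B)$ is an integral domain. Two ingredients are needed, and the hypotheses on $R$ will be used only in the first.

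\emph{Step 1: $B$ itself is completely integrally closed.} Suppose $\xi=u/w$ with $u,w\in B$ and $b\xi^{n}\in B$ for all $n\ge1$, where $0\ne b\in B$. Since $u,w,b$ are integral over $R$, the subalgebra $R_{0}:=R[u,w,b]$ is module-finite over $R$, hence a Noetherian subdomain of $B$, and $\xi\in\Frac(R_{0})$. By the Mori--Nagata theorem the integral closure $\overline{R_{0}}$ of $R_{0}$ in $\Frac(R_{0})$ is a Krull domain, in particular completely integrally closed. Each $b\xi^{n}$ lies in $B$, hence is integral over $R_{0}$, and lies in $\Frac(R_{0})$, so $b\xi^{n}\in\overline{R_{0}}$; therefore $\xi\in\overline{R_{0}}$, so $\xi$ is integral over $B$, and $\xi\in B$ since $B$ is integrally closed.

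\emph{Step 2: if $B$ is a completely integrally closed perfect $\mathbf{F}_{p}$-domain, then $\mathbf{W}(B)$ is completely integrally closed.} This is the heart of the argument and follows (\cite{Bour}; Proposition 14 in \S 1.4), passing from formal power series to Witt vectors. Put $\mathcal{B}=\mathbf{W}(B)$ and $\mathcal{K}=\Frac(B)$, a perfect field, so that $\mathbf{W}(\mathcal{K})$ is a complete discrete valuation ring with uniformizer $p$; let $v$ be its valuation, extended to its fraction field. Functoriality of Witt vectors embeds $\mathcal{B}\hookrightarrow\mathbf{W}(\mathcal{K})$, and by Proposition \ref{WittVector} the localization $\mathcal{B}_{(p)}$ is a discrete valuation ring, equal to the valuation ring $\{z\in\Frac(\mathcal{B}):v(z)\ge0\}$; in particular $\mathcal{B}_{(p)}\subseteq\mathbf{W}(\mathcal{K})$. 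Let $0\ne d\in\mathcal{B}$ and $\xi\in\Frac(\mathcal{B})$ satisfy $d\xi^{n}\in\mathcal{B}$ for all $n\ge1$. From $v(d)+n\,v(\xi)=v(d\xi^{n})\ge0$ for every $n$ and $v(d)<\infty$ we get $v(\xi)\ge0$, so $\xi\in\mathcal{B}_{(p)}\subseteq\mathbf{W}(\mathcal{K})$ and has a unique Witt representation $\xi=\sum_{i\ge0}[\eta_{i}]p^{i}$ with $\eta_{i}\in\mathcal{K}$. Replacing $d$ by $d/p^{v(d)}$ --- still in $\mathcal{B}$, and still a certifying element, because $p^{v(d)}\cdot(d/p^{v(d)})\xi^{n}=d\xi^{n}\in\mathcal{B}$ forces $(d/p^{v(d)})\xi^{n}\in\mathcal{B}$ by uniqueness of Witt representations --- we may assume $d=[d_{0}]+[d_{1}]p+\cdots$ with $d_{0}\ne0$. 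Now reducing $d\xi^{n}\in\mathcal{B}$ modulo $p$ gives $d_{0}\eta_{0}^{\,n}\in B$ for all $n\ge1$, so $\eta_{0}\in B$ by the complete integral closedness of $B$; then $[\eta_{0}]\in\mathcal{B}$, and $\xi':=p^{-1}(\xi-[\eta_{0}])=\sum_{j\ge0}[\eta_{j+1}]p^{j}\in\mathbf{W}(\mathcal{K})$ is again almost integral over $\mathcal{B}$ with the same $d$: expanding $(\xi-[\eta_{0}])^{n}$ shows $d(\xi-[\eta_{0}])^{n}\in\mathcal{B}$, i.e.\ $p^{n}\,d(\xi')^{n}\in\mathcal{B}$, which forces $d(\xi')^{n}\in\mathcal{B}$. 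Iterating this passage to successive tails yields $\eta_{i}\in B$ for all $i$, i.e.\ $\xi\in\mathcal{B}$.

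Combining Steps 1 and 2, $\mathbf{W}(B)$ is completely integrally closed, hence integrally closed, and it is a domain by Lemma \ref{p-adic}. I expect the main obstacle to be the bookkeeping in Step 2 --- verifying that almost-integrality over $\mathcal{B}$ is inherited by each tail $p^{-1}(\xi-[\eta_{0}])$ with the \emph{same} certifying element $d$ --- which rests on the elementary divisibility fact that $z\in\mathbf{W}(\mathcal{K})$ and $p^{k}z\in\mathbf{W}(B)$ imply $z\in\mathbf{W}(B)$ (uniqueness of Witt representations), together with the observation that reduction modulo $p$ is a ring map, so the reduction of $d\xi^{n}$ is $d_{0}\eta_{0}^{\,n}$. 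Step 1 is classical once the Mori--Nagata theorem is invoked, and it is the only place where the Noetherian normality of $R$ is used.
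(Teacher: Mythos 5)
Your proposal is correct, and its core (Step 2) is essentially the paper's own argument, which also reduces the theorem to showing $\mathbf{W}(B)$ is completely integrally closed and then runs the Bourbaki-style digit argument inside $\mathbf{W}(\mathcal{K})$: the paper phrases it as a contradiction at the least index $i$ with $c_i\notin B$, extracting the lowest Witt digit $b_jc_i^n$ of $a\bigl(f-\sum_{k<i}[c_k]p^k\bigr)^n$, whereas you normalize the certifying element $d$ so that its leading digit $d_0$ is nonzero and then induct on the tails $p^{-1}(\xi-[\eta_0])$, using reduction modulo $p$ and the divisibility fact that $p^k z\in\mathbf{W}(B)$ with $z\in\mathbf{W}(\mathcal{K})$ forces $z\in\mathbf{W}(B)$; these are the same mechanism, and your bookkeeping (inheritance of the certifier by each tail, $[\,-\,]_{\mathcal K}$ restricting to $[\,-\,]_B$, uniqueness of Witt representations) is sound, though the detour through $\mathcal{B}_{(p)}$ being the valuation ring of $v$ is unnecessary since $v(\xi)\ge0$ already puts $\xi$ in $\mathbf{W}(\mathcal{K})$. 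Where you genuinely diverge is Step 1, the complete integral closedness of $B$: the paper uses $R\subset B\subset R^+$, writes the Noetherian normal domain as $\bigcap_{\Ht\fp=1}R_\fp$, invokes Swanson--Huneke (Theorem 2.1.17) to get $B=\bigcap_{\Ht\fp=1}\overline{R_\fp}$, and uses that each $\overline{R_\fp}$ is a one-dimensional Pr\"ufer domain, hence an intersection of rank-one valuation rings and therefore completely integrally closed; you instead descend to the module-finite (hence Noetherian) subalgebra $R[u,w,b]$ and invoke Mori--Nagata to place all the elements $b\xi^n$ in a Krull domain. Your route is more economical in hypotheses --- it never uses normality of $R$, only that $R$ is Noetherian and $B$ is an integrally closed domain integral over it --- at the cost of citing Mori--Nagata, while the paper's route trades that for the valuation-theoretic structure of integral closures of discrete valuation rings.
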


\begin{proof}
Recall that $\mathbf{W}(B)$ is an integral domain and any completely integrally closed domain is integrally closed. So it suffices to prove that $\mathbf{W}(B)$ is a completely integrally closed domain. 

First, we prove that $B$ is a completely integrally closed domain. By assumption, we know that 
$$
R \subset B \subset R^+.
$$ 
More generally, we claim that if $A$ is a Noetherian normal domain in any characteristic, then any integrally closed domain $C$ such that $A \subset C \subset A^+$ is a completely integrally closed domain, which we prove below. Note that
$$
A=\bigcap_{\Ht \fp=1} A_{\fp}
$$
and each $A_{\fp}$ is a discrete valuation ring in view of the fact that a Noetherian normal domain is the intersection of its localizations at all height-one primes. Let $\mathcal{L}$ be the field of fractions of $C$ and let $\overline{A_{\fp}}$ be the integral closure of $A_{\fp}$ in $\mathcal{L}$. Then using (\cite{SwHu}; Theorem 2.1.17), we have
$$
C=\bigcap_{\Ht \fp=1} \overline{A_{\fp}}.
$$
Since $\overline{A_{\fp}}$ is the integral closure of a discrete valuation ring, it is a Pr\"ufer domain of Krull dimension 1 and $\overline{A_{\fp}}$ is equal to the intersection of rank-one valuation rings that are obtained as the localizations at all height-one primes of $\overline{A_{\fp}}$. Since the intersection of rank-one valuation rings is completely integrally closed, $\overline{A_{\fp}}$ and thus $C$ are completely integrally closed as well.

Next, we consider the following commutative diagram:
$$
\begin{CD}
B @>[-]_B>> \mathbf{W}(B) \\
@VVV @VVV \\
\mathcal{K} @>[-]_{\mathcal{K}}>> \mathbf{W}(\mathcal{K})
\end{CD}
$$ 
in which $\mathcal{K}$ is the field of fractions of $B$, and $[-]_B$ and $[-]_{\mathcal{K}}$ denote the Teichm\"uller mappings.

Let $f \in \Frac(\mathbf{W}(B))$ be a nonzero element such that $af^n \in \mathbf{W}(B)$ for all $n \in \mathbf{N}$ and some nonzero element $a \in \mathbf{W}(B)$. Since $\mathbf{W}(\mathcal{K})$ is a discrete valuation ring, we have $f \in \mathbf{W}(\mathcal{K})$. Write
$$
a=\sum_{k=0}^{\infty}[b_k]_{B} \cdot p^k~(\mathrm{resp}.~f=\sum_{k=0}^{\infty}[c_k]_{\mathcal{K}} \cdot p^k),
$$
which is the unique Witt representations with $b_k \in B$ (resp. $c_k \in \mathcal{K}$). To derive a contradiction, we may assume $c_i \notin B$ for some $i$ which we choose as the smallest one. Then since $\sum_{k=0}^{i-1}[c_k]_{\mathcal{K}} \cdot p^k \in \mathbf{W}(B)$, the binomial expansion shows that
$$
a(f-\sum_{k=0}^{i-1}[c_k]_{\mathcal{K}} \cdot p^k)^n \in \mathbf{W}(B)
$$
for all $n \in \mathbf{N}$. Next, choose the smallest $j$ for which $b_j \ne 0$. Then the nonzero term of $a(f-\sum_{k=0}^{i-1}[c_k]_{\mathcal{K}} \cdot p^k)^n$ of the lowest degree with respect to $p$ is
$$
[b_j]_B[ c_i^n]_{\mathcal{K}} \cdot p^{ni+j} \in \mathbf{W}(B).
$$
Let $\sum_{k=0}^{\infty} [d_k]_B \cdot p^k$ be the Witt representation of $[b_j]_B[ c_i^n]_{\mathcal{K}} \cdot p^{ni+j}$ as an element of $\mathbf{W}(B)$. Then noting that the restriction of $[-]_{\mathcal{K}}$ to $B$ coincides with $[-]_B$ and $[b_j]_B[ c_i^n]_{\mathcal{K}}=[b_j c_i^n]_{\mathcal{K}}$, both 
$$
\sum_{k=0}^{\infty} [d_k]_B \cdot p^k~\mbox{and}~[b_j]_B[ c_i^n]_{\mathcal{K}} \cdot p^{ni+j}
$$
give the Witt representations of the same element in $\mathbf{W}(\mathcal{K})$. Then by the uniqueness of the Witt representation, it follows that $d_{ni+j}=b_j c_i^n$ and hence $b_j c_i^n \in B$ for all $n \in \mathbf{N}$.

As we have shown that $B$ is completely integrally closed, the above relation gives us $c_i \in B$, which contradicts our hypothesis that $c_i \notin B$. Therefore, we have $f \in \mathbf{W}(B)$ and this finishes the proof.
\end{proof}

\begin{corollary}
\label{completeintegral}
Let $B$ be a perfect completely integrally closed $\mathbf{F}_p$-domain. Then $\mathbf{W}(B)$ is completely integrally closed, hence integrally closed.
\end{corollary}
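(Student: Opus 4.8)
The plan is to recycle the argument already carried out in the second half of the proof of Theorem~\ref{normal}, observing that the only features of $B$ that it used were that $B$ is a perfect $\mathbf{F}_p$-domain and that $B$ is completely integrally closed. Concretely, I would first reduce to showing that $\mathbf{W}(B)$ is completely integrally closed: it is an integral domain by Lemma~\ref{p-adic}, and a completely integrally closed domain is automatically integrally closed. Next I would set $\mathcal{K}=\Frac(B)$, which is a perfect field by Lemma~\ref{adic}, so that $\mathbf{W}(\mathcal{K})$ is a complete discrete valuation ring with maximal ideal $(p)$, and the inclusion $B\hookrightarrow\mathcal{K}$ induces an inclusion $\mathbf{W}(B)\hookrightarrow\mathbf{W}(\mathcal{K})$ compatible with the Teichm\"uller maps $[-]_B$ and $[-]_{\mathcal{K}}$.

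Given a nonzero $f\in\Frac(\mathbf{W}(B))$ with $af^n\in\mathbf{W}(B)$ for all $n$ and some nonzero $a\in\mathbf{W}(B)$, the element $f$ is then almost integral over $\mathbf{W}(\mathcal{K})$ as well; since a discrete valuation ring is completely integrally closed, $f\in\mathbf{W}(\mathcal{K})$. Writing the Witt representations $a=\sum_k[b_k]_B p^k$ and $f=\sum_k[c_k]_{\mathcal{K}}p^k$, I would argue by contradiction: if $c_i\notin B$ for some $i$, choose the smallest such $i$, subtract $\sum_{k<i}[c_k]_{\mathcal{K}}p^k\in\mathbf{W}(B)$ from $f$, and expand $a\bigl(f-\sum_{k<i}[c_k]_{\mathcal{K}}p^k\bigr)^n$ binomially; with $j$ the least index for which $b_j\neq 0$, the lowest $p$-order term of this element is $[b_j c_i^n]_{\mathcal{K}}p^{ni+j}$, which therefore lies in $\mathbf{W}(B)$. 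Comparing with the Witt representation of this element over $B$ and invoking uniqueness of Witt representations in $\mathbf{W}(\mathcal{K})$, one obtains $b_j c_i^n\in B$ for every $n$, so $c_i$ is almost integral over $B$; since $B$ is completely integrally closed, this forces $c_i\in B$, a contradiction. Hence $f\in\mathbf{W}(B)$, and $\mathbf{W}(B)$ is completely integrally closed.

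I do not expect a genuine obstacle, precisely because the hypothesis in Theorem~\ref{normal} that $R$ is Noetherian normal with $R\subset B\subset R^+$ was used there only to establish the complete integral closure of $B$, which is now assumed outright. The only points needing a little care are bookkeeping steps already handled in Theorem~\ref{normal}: that subtracting a truncation of $f$ preserves the almost-integrality relation, and that extracting the lowest-$p$-order coefficient of the relevant product is legitimate (this uses $p$-torsion freeness of $\mathbf{W}(\mathcal{K})$ and that $\mathcal{K}$ is a domain). So the corollary is essentially a restatement of the part of Theorem~\ref{normal} that does not appeal to the ambient Noetherian normal domain.
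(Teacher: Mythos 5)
Your proposal is correct and is exactly the paper's intended argument: the paper proves the corollary by the remark ``the proof is the same as above,'' i.e.\ by rerunning the second half of the proof of Theorem~\ref{normal}, where the Noetherian normal ambient ring was used only to establish that $B$ is completely integrally closed, which is now assumed. Your write-up makes this explicit and matches the paper's route step for step.
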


\begin{proof}
The proof is the same as above.
\end{proof}

\begin{example}
\label{deformnormal}
\begin{enumerate}
\item[(1)]
In the Noetherian case, the following result is known. Let $(R,\fm)$ be a Noetherian local ring with a regular element $y \in \fm$. Then if $R/yR$ is normal, $R$ is normal. This may be seen by Serre's normality criterion. However this is false in the non-Noetherian case. The following simple example was shown to the author by R. Heitmann. Let
$$
K[x^2,x^3,y,x^2/y,x^3/y,\ldots]
$$
be an algebra over a field $K$ and define $R$ to be its localization at the obvious maximal ideal. Then it is a two-dimensional quasi-local domain with its maximal ideal generated by $y$ and thus $R/yR$ is a field. However, since $x \notin R$ and it is integral over $R$, $R$ is not normal.

\item[(2)]
There is another example of a quasi-local ring for which normality fails to deform. Let $V$ be a valuation domain of Krull dimension $\ge 2$. Then $R=V[[x]]$ is not integrally closed, but $R/xR=V$ is integrally closed.
\end{enumerate}
\end{example}

\begin{example}
We have constructed the ring $R_{\infty}$ previously. The ring $R_{\infty}$ is $p$-torsion free and $R_{\infty}/pR_{\infty}$ is a perfect $\mathbf{F}_p$-algebra. Now the $p$-adic completion of $R_{\infty}$, denoted by $R_{\infty}^{\wedge}$, is isomorphic to $\mathbf{W}(R_{\infty}/pR_{\infty})$. It follows from Theorem \ref{normal} that $R_{\infty}^{\wedge}$ is a normal domain.
\end{example}

\begin{remark}
\begin{enumerate}
\item
Any normal ring $A$ is obtained as the filtered colimit of its Noetherian normal subrings. In fact, it is the filtered colimit of subalgebras which are homomorphic images of algebras of finite type over $\mathbf{Z}$. Since they are excellent, their normalizations in their total ring of fractions are Noetherian normal rings contained in $A$ and their union will be identified with $A$.

\item
Unlike normality, the filtered colimit of completely integrally closed domains such that each transition map is torsion free is not necessarily completely integrally closed. In fact, a valuation domain of Krull dimension at least 2 provides such an example, as it can be written as the filtered colimit of Noetherian integrally closed domains.
\end{enumerate}
\end{remark}

\section{Comments and questions}

The Witt vectors have been rarely used systematically in commutative algebra. In arithmetic geometry, it becomes more common to use the Witt vectors of general commutative algebras, especially in the $p$-adic Hodge theory (see \cite{DavKed}, \cite{KedRuo} for example). We believe that this technique is durable and will bear more applications to attack problems in mixed characteristic.

\begin{question}
Let $A$ be a complete Noetherian local ring of prime characteristic $p>0$. Then what can we say about the set of isomorphism classes of deforming the $\mathbf{F}_p$-algebra $A$ to $\mathbf{Z}_p$-torsion free and $p$-adically complete local algebras? 
\end{question}

A nice answer to this question may be given by the theory of cotangent complex (\cite{Sch}; Theorem 5.11 and Theorem 5.12). Here, we do not assume that a $p$-adic deformation of $A$ is Noetherian. However, we have the following fact. Suppose that $R$ is a $p$-adic deformation of a complete Noetherian local ring $A$. Then $R$ is a complete Noetherian local ring in view of Lemma \ref{completion} together with (\cite{Mat}; Theorem 29.4).

\begin{question}
Let $A$ be a coherent perfect $\mathbf{F}_p$-algebra. Then is the Witt vectors $\mathbf{W}(A)$ also coherent?
\end{question}

For coherent rings and modules, we refer the reader to \cite{Glaz}. This question seems a bit subtle, because it is known that a power series ring over a valuation ring of Krull dimension greater than one is not coherent in \cite{JoSm}. So it seems to require more conditions to have a positive answer.

\begin{question}
Let $A$ be a perfect $\mathbf{F}_p$-algebra and let $\mathbf{F}:\mathbf{W}(A) \to \mathbf{W}(A)$ be the Witt-Frobenius map. Then which subalgebras of $\mathbf{W}(A)$ are stable under $\mathbf{F}$?
\end{question}

If $A$ is a perfect $\mathbf{F}_p$-algebra, then it is not necessarily true that the Witt-Frobenius map $\mathbf{F}:\mathbf{W}(A) \to \mathbf{W}(A)$ stabilizes its subalgebra. However, we have the following fact. Let $B \subset \mathbf{W}(A)$ be its subalgebra. If $b \in B$, then we have $\mathbf{F}(b)-b^p \in p\mathbf{W}(A)$ and this implies that $\mathbf{F}(B+p\mathbf{W}(A)) \subset B+p\mathbf{W}(A)$. The structure of the map $\mathbf{F}$ is rather complicated.

Finally, let us mention that a special case of the Hochster's homological conjecture is established in \cite{Shim3}, using the Witt vectors and the author hopes to use the main result of the present paper in the research of the homological conjectures in a future's occasion.

\begin{ack}
I would like to thank L. E. Miller for teaching the author about Witt vectors and thank P. C. Roberts for suggesting to use the complete integral closure for the proof of the main theorem of this article. Finally, the author would like to thank the people for useful comments during a seminar at Meiji University.
\end{ack}

\end{document}